\newtheorem{lm}{Lemma}[section]
\newtheorem{thm}{Theorem}[section]
\newcounter{saveeqn}%
\title{\Large\bf Crossing limit cycles of nonsmooth Li\'enard systems
and applications
\thanks{
Supported by NSFC 11871355 and 11801079, Graduate Student's Research and Innovation Fund of Sichuan University 2018YJSY047.
}
}
\author{Tao Li$^1$, ~~~~Hebai Chen$^2$, ~~~~Xingwu Chen$^{1}$\!\!
\footnote{Author to whom any correspondence should be addressed. Email address: xingwu.chen@hotmail.com (X. Chen).}
\\
{\small 1. Department of Mathematics, Sichuan University,}\\
{\small Chengdu, Sichuan 610064, P. R. China}
\\
{\small 2. School of Mathematics and Statistics, Central South University,}\\
  {\small Changsha, Hunan 410083, P. R. China}
}
\date{}
\begin{document}
\maketitle

\begin{abstract}
Continuing the investigation for the number of crossing limit cycles of
nonsmooth Li\'enard systems in [Nonlinearity {\bf 21}(2008), 2121-2142] for
the case of a unique equilibrium, in this paper we consider the case of any number of equilibria.
We give results about the existence and uniqueness of crossing limit cycles, which hold
not only for a unique equilibrium but also for multiple equilibria.
Moreover, we find a sufficient condition for the nonexistence of crossing limit cycles.
Finally, applying our results we prove the uniqueness of crossing limit cycles for planar piecewise linear systems
with a line of discontinuity and without sliding sets.
\vskip 0.2cm
{\bf Keywords:} discontinuity, Li\'enard systems, limit cycles, piecewise linear systems
\end{abstract}

\baselineskip 15pt
\parskip 10pt
\thispagestyle{empty}
\setcounter{page}{1}

\section{Introduction and main results}
\setcounter{equation}{0}
\setcounter{lm}{0}
\setcounter{thm}{0}
\setcounter{rmk}{0}
\setcounter{df}{0}
\setcounter{cor}{0}
A class of important dynamical systems is the Li\'enard system, which is originated from physics and then is applied to
engineering, biology, chemistry and more fields. As usual, the Li\'enard system can be written as
\begin{eqnarray}
\left\{
\begin{aligned}
&\dot x=F(x)-y,\\
&\dot y=g(x),
\end{aligned}
\right.
\label{ls}
\end{eqnarray}
where $F(x):=\int_0^xf(s)ds$, $f(x)$ and $g(x)$ are two scalar functions.
For system (\ref{ls}), a main and challenging subject is to study the existence, uniqueness and number of limit cycles,
i.e., isolated periodic orbits in the phase space.
The 13th Smale's problem provided in \cite{SM} is about the maximum number of limit cycles in the polynomial system of form $\dot x=F(x)-y, \dot y=x$,
i.e., the famous 16th Hilbert's problem restricted to the polynomial system of form (\ref{ls}) with $g(x)=x$, and is still open.
When system (\ref{ls}) is smooth, the investigation of the existence, uniqueness and number of limit cycles has a long history and
many excellent results are obtained as given in journal papers \cite{FL, FL2, FP, CHRI, CLJL} and text book \cite{ZZF}.

On the other hand, many models in practical problems are established by system (\ref{ls}) in a nonsmooth form such as
\begin{eqnarray}
\left\{
\begin{aligned}
&\dot x=ax+bx^3-y,\\
&\dot y=x-{\rm sgn}(x),
\end{aligned}
\right.
\label{e1}
\end{eqnarray}
which is the limit case of a smooth oscillator (see \cite{ChL-QC, C1}). Another system
\begin{eqnarray}
\left\{
\begin{aligned}
&\dot x=Tx-y,\\
&\dot y=Dx-\left\{
\begin{aligned}
&1-D\cdot X_{ref}~~~~~~&&{\rm if}~~x<0,\\
&-D\cdot X_{ref}~~~~~~&&{\rm if}~~x>0
\end{aligned}
\right.
\end{aligned}
\right.
\label{e2}
\end{eqnarray}
is the Li\'enard form of a buck electronic converter (see \cite{ChL-EE}),
where $T, D, X_{ref}$ are parameters. Motivated by practical problems, many mathematicians and
engineers have started to investigate the existence, uniqueness and number of limit cycles for the nonsmooth
Li\'enard system (\ref{ls}). Usually, the nonsmoothness leads to much difficulty in the analysis of nonsmooth
Li\'enard systems and much less results are obtained(see,e.g., \cite{CHX, FJJ, JM, JEF}), compared with the smooth case.

Consider the Li\'enard system (\ref{ls}) with nonsmooth functions
$$f(x):=
\left\{
\begin{array}{c}
f^+(x)~~~~~~{\rm if}~~x>0,\\
f^-(x)~~~~~~{\rm if}~~x<0,
\end{array}
\right.
~~~~~~~~~~~~~~~~
g(x):=
\left\{
\begin{array}{c}
g^+(x)~~~~~~{\rm if}~~x>0,\\
g^-(x)~~~~~~{\rm if}~~x<0,
\end{array}
\right.$$
where $f^\pm(x), g^\pm(x):\mathbb{R}\rightarrow\mathbb{R}$ are smooth functions.
Hence, the nonsmoothness or even discontinuity of system (\ref{ls}) occurs only on $y$-axis, the {\it switching line}.
Let $F^\pm(x):=\int_{0}^xf^\pm(s)ds$. We rewrite the nonsmooth Li\'enard system (\ref{ls}) as
\begin{eqnarray}
\left(\dot x, \dot y\right)=
\left\{
\begin{aligned}
\left(F^+(x)-y, g^+(x)\right)~~~~~~~~{\rm if}~~x>0,\\
\left(F^-(x)-y, g^-(x)\right)~~~~~~~~{\rm if}~~x<0.
\end{aligned}
\right.
\label{LS}
\end{eqnarray}
For convenience, we call the subsystem in $x>0$ and $x<0$ the {\it right system} and {\it left system} of (\ref{LS}), respectively.
Since $F^\pm(0)=0$, we always define the $x$-component of the vector field of (\ref{LS}) as $-y$ on $y$-axis, i.e., $\dot x=-y$.
If $g^+(0)=g^-(0)$, the $y$-component of the vector field of (\ref{LS}) is defined as $g^+(0)$ on $y$-axis, i.e., $\dot y=g^+(0)$.
Then (\ref{LS}) is continuous.
If $g^+(0)\ne g^-(0)$, the $y$-component of the vector field of (\ref{LS}) has a
jump discontinuity on $y$-axis, which implies that (\ref{LS}) is a discontinuous system. Thus we define
the solution of (\ref{LS}) passing through a point in $y$-axis by the Filippov convention(see \cite{A, YSA}).
In fact, observe that the switching line consists of the origin $O$ and two {\it crossing sets}, i.e., the positive $y$-axis
and the negative $y$-axis. Let $q$ be a point in $y$-axis. If $q$ belongs to the positive (resp. negative) $y$-axis, then
the solution of (\ref{LS}) passing through $q$ crosses $y$-axis at $q$ from right (resp. left) to left (resp. right).
If $q$ lies at $O$, it is proved in \cite[Proposition 1]{JEF} that $q$ is a {\it boundary equilibrium} when
$g^+(0)g^-(0)=0$, a {\it pseudo-equilibrium} when $g^+(0)g^-(0)<0$ and a regular point when $g^+(0)g^-(0)>0$. Thus an equilibrium of (\ref{LS})
has three types, including boundary equilibria and pseudo-equilibria lying in $x=0$, regular equilibria lying in $x\ne0$.
In the third case, $q$ is also called a {\it parabolic fold-fold point} (see \cite{BMT}) and, more precisely, both orbits of the left and right systems
passing through $q$ are quadratically tangent to the $y$-axis from the left half plane when $g^+(0)>0, g^-(0)>0$
and from the right half plane when $g^+(0)<0, g^-(0)<0$.

A limit cycle of system (\ref{LS}) either totally lies in one of half planes $x\ge0$ and $x\le0$, or presents intersections with both $x>0$ and $x<0$.
Since the former one can be determined by one of the right and left systems, our attention is paid to the latter one, i.e., {\it crossing limit cycle}
(see \cite{YSA}). For (\ref{LS}), there exist a few results on the existence, uniqueness and number of crossing
limit cycles, such as \cite{CHX, FJJ, JM, JEF}. In \cite{JEF}, a necessary condition of the existence of crossing limit cycles
and a sufficient condition for the uniqueness are given. In \cite{FJJ}, a sufficient condition of the existence and uniqueness of
crossing limit cycles is presented. The number of crossing limit cycles is studied in \cite{CHX,JM}. Unfortunately,
we observe that almost in all publications (\ref{LS}) is required to satisfy
\begin{eqnarray}
g^+(x)>0~~~~ {\rm for}~~ x>0, ~~~~~~~g^-(x)<0~~~~ {\rm for}~~ x<0.
\label{dapo}
\end{eqnarray}
Condition (\ref{dapo}) implies that the origin $O$ is the unique equilibrium of (\ref{LS})
and, hence, any crossing limit cycle surrounds $O$ as indicated in \cite{JEF}.
However, in many practical problems system (\ref{LS}) may have multiple equilibria that lie in the left and right half planes,
such as systems (\ref{e1}) and (\ref{e2}), and then a crossing limit cycle can surround multiple equilibria.
On the existence, uniqueness and number of crossing limit cycles of (\ref{LS}) with multiple equilibria,
as far as we know, the results are lacking and there are only a few results restricted to concrete models (see e.g.,\cite{C1}). For smooth Li\'enard systems with multiple equilibria, some results of limit cycles have been given in \cite{FL, FL2, XDZZ, ZZF}.

The goal of this paper is to study the existence, nonexistence and uniqueness of crossing limit cycles for the nonsmooth Li\'enard system (\ref{LS})
with any number of equilibria. We particularly emphasize the case of multiple equilibria. In order to state our main results, we give some basic hypotheses for system (\ref{LS}) as the following {\bf(H1)}-{\bf(H3)}.
\begin{description}
\item[(H1)] There exists a constant $x_e\ge0$ such that $g^+(x)(x-x_e)>0$ for $x>0$ and $x\ne x_e$.
\item[(H2)] $f^+(x)>0$ for $x>0$ and $f^-(x)<0$ for $x<0$.
\end{description}

Define
\begin{eqnarray}
p=p(x):=\left\{
\begin{aligned}
F^+(x)~~~~~{\rm if}~~x\ge0,\\
F^-(x)~~~~~{\rm if}~~x<0.
\end{aligned}
\right.
\label{pde}
\end{eqnarray}
Clearly, $p(x)$ is continuous and $p(0)=0$ due to $F^\pm(0)=0$. Moreover, $p'(x)=f^+(x)>0$ for $x>0$ and $p'(x)=f^-(x)<0$ for $x<0$
by {\bf(H2)}. Thus $p(x)$ is strictly increasing for $x>0$ and strictly decreasing for $x<0$, implying $p(x)\ge0$ for
all $x\in \mathbb{R}$ and $p(x)$ has a strictly increasing inverse function
\begin{eqnarray}
x^+(p):[0, p^+)\rightarrow[0, +\infty)
\label{inverser}
\end{eqnarray}
and a strictly decreasing inverse function
\begin{eqnarray}
x^-(p):[0, p^-)\rightarrow(-\infty, 0],
\label{inversel}
\end{eqnarray}
where $p(x)\rightarrow p^\pm (x\rightarrow\pm\infty)$. It follows from the monotonicity that $p^+$ (resp. $p^-$) is either a constant
or infinity.
\begin{description}
\item[(H3)] There exist the limits
\begin{eqnarray}
\lim_{p\rightarrow0^+}\frac{g^\pm(x^\pm(p))}{f^\pm(x^\pm(p))}=\eta^\pm
\label{limit}
\end{eqnarray}
and $-\infty\!<\!\eta^+\!\le\!\eta^-\!<\!+\infty$.
Moreover, $g^+/f^+|_{x=x^+(p)}<g^-/f^-|_{x=x^-(p)}$ for all sufficiently small $p>0$ if $\eta^+=\eta^-$.
\end{description}

Under hypothesis {\bf(H1)}, in the right half plane system (\ref{LS}) has no equilibria if $x_e=0$ and a unique equilibrium if $x_e>0$.
Moreover, the unique equilibrium in the right half plane lies at $(x_e, F^+(x_e))$. Let $E$ be the point lying at $(x_e, F^+(x_e))$ if $x_e>0$ and $O$ if $x_e=0$. The number of equilibria in the left half plane is not determined by {\bf(H1)}-{\bf(H3)}. In other word, system (\ref{LS}) in the left half plane may have any number of equilibria. On the $y$-axis, system (\ref{LS})
has no equilibria if $g^+(0)g^-(0)>0$ and a unique equilibrium $O$ if $g^+(0)g^-(0)\le 0$ as indicated in
the third paragraph. The example satisfying $f^+(x)=1, f^-(x)=-1, g^+(x)=x-2a$ and $g^-(x)=2x+a$ with $a\ge0$
implies the reasonability of {\bf(H1)}-{\bf(H3)}.

In the following, we state our main results.

\begin{thm}
If system {\rm(\ref{LS})} with {\bf (H1)-(H3)} has a crossing periodic orbit $\Gamma$, then
\\
{\rm(i)} $\Gamma$ surrounds $O$ and $E$ counterclockwise;
\\
{\rm(ii)} the equations
\begin{eqnarray}
F^-(x^-)=F^+(x^+),~~~~~~\frac{g^-(x^-)}{f^-(x^-)}=\frac{g^+(x^+)}{f^+(x^+)}
\label{eq}
\end{eqnarray}
have at least one solution $(x^-, x^+)=(x^-_*, x^+_*)$ with $x^-_*<0<x^+_*$
satisfying that $\Gamma$ transversally intersects both lines $x=x^-_*$ and $x=x^+_*$.
\label{suff}
\end{thm}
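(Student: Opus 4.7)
The plan separates the two assertions: (i) is handled by a direct orientation-plus-Jordan-curve analysis, and (ii) is converted, via the Li\'enard change of variables, into a scalar existence question.

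For (i), along the switching line $\dot x=-y$, so $\Gamma$ crosses the positive $y$-axis to the left and the negative $y$-axis to the right. Since $\Gamma$ is a crossing orbit it meets each half-axis at least once, and the directions of these transversal crossings both fix the orientation as counterclockwise and put the open segment connecting any rightward crossing to any leftward crossing into the interior of $\Gamma$; in particular $O$ is enclosed. If $x_e=0$ then $E=O$ and we are finished. Otherwise, the right piece of $\Gamma$ runs from $(0,-y_1)$ with $-y_1<0$ to $(0,y_2)$ with $y_2>0$, and by (H1) it cannot remain in $\{0<x\le x_e\}$, because $\dot y=g^+(x)<0$ there makes $y$ non-increasing along any arc confined to that strip. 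Hence the right piece meets $x=x_e$ at one lower-branch point (with $y<F^+(x_e)$) and one upper-branch point (with $y>F^+(x_e)$), and the local picture of the interior on the two sides of a transversal crossing puts the segment joining them---and therefore $E=(x_e,F^+(x_e))$---into the interior of $\Gamma$.

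For (ii), introduce $p=p(x)$ from (\ref{pde}). Each half of $\Gamma$ is then described in $(p,y)$ coordinates by a lower branch $y=y_l^\pm(p)$ with $y<p$ and an upper branch $y=y_u^\pm(p)$ with $y>p$, both solving
\[
\frac{dy}{dp}=\frac{G^\pm(p)}{p-y},\qquad G^\pm(p):=\frac{g^\pm(x^\pm(p))}{f^\pm(x^\pm(p))}.
\]
These four branches share the boundary values $y_u^\pm(0)=y_2$, $y_l^\pm(0)=-y_1$ on the $y$-axis, and on each side the upper and lower branches coalesce on the diagonal $y=p$ at $p_*^R$ on the right and $p_*^L$ on the left. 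Transversal intersection of $\Gamma$ with a vertical line $x=c$ is exactly the condition $y\ne p$ at the intersection, so requiring $\Gamma$ to transversally meet both $x=x_*^-$ and $x=x_*^+$ is equivalent to asking for a common value $p_*:=F^-(x_*^-)=F^+(x_*^+)\in(0,\min(p_*^R,p_*^L))$. With $x_*^\pm=x^\pm(p_*)$ the first equation of (\ref{eq}) is automatic and the second reduces to $G^+(p_*)=G^-(p_*)$, so (ii) becomes the existence of a zero of $G^+-G^-$ in $(0,\min(p_*^R,p_*^L))$.

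To produce such a zero I apply the intermediate value theorem. Hypothesis (H3) gives $G^+(p)<G^-(p)$ for all sufficiently small $p>0$, so it suffices to exhibit a point in $(0,\min(p_*^R,p_*^L))$ at which $G^+\ge G^-$; I argue this by contradiction. If $G^+<G^-$ throughout $(0,\mu)$ with $\mu:=\min(p_*^R,p_*^L)$, then an ODE comparison on the upper-branch equation (where $p-y<0$) forces $y_u^R(p)>y_u^L(p)$ on $(0,\mu)$, which together with $y_u^L(p_*^L)=p_*^L$ rules out $p_*^R\le p_*^L$ and gives $\mu=p_*^L<p_*^R$; the symmetric comparison on the lower-branch equation gives $y_l^R(p)<y_l^L(p)$ on $(0,p_*^L)$. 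I would then combine these branch orderings with closure identities obtained by integrating $dy$ and $y\,dy$ along $\Gamma$---Li\'enard-type identities such as $y_1+y_2=\int_0^{p_*^R}G^+(p)W_R(p)\,|(p-y_u^R)(p-y_l^R)|^{-1}dp=\int_0^{p_*^L}G^-(p)W_L(p)\,|(p-y_u^L)(p-y_l^L)|^{-1}dp$, with $W=y_u-y_l$---to extract a quantitative contradiction. The main obstacle is precisely this closing step: the qualitative branch comparisons are already consistent with $p_*^R\ne p_*^L$, so the delicate part is coupling them with the closure identities and the sign information on $G^+$ supplied by (H1) to rule out strict $G^+<G^-$ throughout.
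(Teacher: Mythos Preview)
Your treatment of (i) is essentially the paper's argument (Lemma~2.1): the sign of $\dot x=-y$ on the axis fixes the orientation, and the sign of $\dot y=g^+(x)$ on $\{0<x<x_e\}$ forces the right arc to cross $x=x_e$, enclosing $E$. No issue there.

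For (ii) you have correctly reduced the problem, via $p=p(x)$, to finding a zero of $G^+-G^-$ on $(0,\min(p_*^R,p_*^L))$, and your differential-inequality comparison under the hypothesis $G^+<G^-$ is right: it forces the $(p,y)$-image of the left arc of $\Gamma$ to lie strictly inside the image of the right arc. But, as you yourself flag, you are missing the closing step, and the ``closure identities'' you propose (integrating $dy$ and $y\,dy$) are not the right tool and would be hard to push through.

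The clean closure identity is an \emph{area equality}. Let $\Omega^\pm$ be the regions in the $(p,y)$-plane bounded by the $y$-axis and the images of the right/left arcs of $\Gamma$. Along each arc the $1$-form $\omega^\pm:=-g^\pm(x)\,dx+(F^\pm(x)-y)\,dy$ vanishes identically (it pairs to zero with the velocity $(\dot x,\dot y)$), so
\[
0=\int_{\Gamma^+}\omega^+ +\int_{\Gamma^-}\omega^-
=\iint_{\Delta^+}f^+(x)\,dx\,dy+\iint_{\Delta^-}f^-(x)\,dx\,dy
=S(\Omega^+)-S(\Omega^-),
\]
where the first equality is Green's formula on the half-disks $\Delta^\pm$ (the segment contributions on the $y$-axis cancel because $F^\pm(0)=0$), and the second is the substitution $p=F^\pm(x)$, $dp=f^\pm\,dx$. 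Thus $S(\Omega^+)=S(\Omega^-)$. Your strict nesting $\Omega^-\subsetneq\Omega^+$ under $G^+<G^-$ then contradicts this immediately, and the intermediate value theorem gives the zero $p_*$. This is exactly the paper's argument; replace your proposed integral identities with this Green's-formula area balance and the proof closes.
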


Theorem~\ref{suff} provides two necessary conditions for the existence of crossing periodic orbits for system (\ref{LS}).
These two necessary conditions help us to determine the configuration of crossing periodic orbits, that is,
any crossing periodic orbit neither lie in each side of the line $x=x_e$ nor
lie in the strip $\hat x^-_*<0<\hat x^+_*$, where $(\hat x^-_*, \hat x^+_*)$ is the solution of (\ref{eq}) satisfying that $\hat x^-_*<0<\hat x^+_*$ is the narrowest strip.
On the other hand, Theorem~\ref{suff} can be regarded as a generalization of \cite[Theorem 2]{JEF}
from one equilibrium to multiple equilibria.
It is required in \cite{JEF} that system (\ref{LS}) satisfies the condition (\ref{dapo}), implying that $O$ is
the unique equilibrium of (\ref{LS}) and any crossing periodic orbit surrounds $O$.
However, in this paper we replace (\ref{dapo}) by the weaker hypothesis {\bf(H1)}, which allows (\ref{LS}) to have multiple equilibria and crossing limit cycles surrounding multiple equilibria.
For example, by Theorem~\ref{suff} the crossing periodic orbit $\Gamma$ surrounds at least
two equilibria ($O$ and $E$) when $x_e>0$ and $g^+(0)g^-(0)\le 0$.
Therefore, our result holds not only for a unique equilibrium, but also for multiple equilibria.

\begin{thm}
For system {\rm(\ref{LS})} with {\bf(H1)-(H3)}, assume that the equations in {\rm(\ref{eq})} have a unique solution
$(x^-, x^+)=(x^-_*, x^+_*)$ with $x^-_*<0<x^+_*$ and one of the following hypotheses holds:
\begin{description}
\setlength{\itemsep}{0mm}
\item[(H4)] $x^+_*\ge x_e$ and $F^+(x)f^+(x)/g^+(x)$ is increasing for $x>x_*^+$;
\item[(H5)] $K^-(x^-(p_2))<K^+(x^+(p_1))$ for all $p_1, p_2$ satisfying $p_2>p_1\ge F^+(x^+_*)$, where
$$K^\pm(x):=\frac{(g^\pm(x))'f^\pm(x)-(f^\pm(x))'g^\pm(x)}{(f^\pm(x))^3}$$
and $x^\pm(p)$ are given in {\rm(\ref{inverser})} and {\rm(\ref{inversel})} respectively.
\end{description}
Then system {\rm(\ref{LS})} has at most one crossing periodic orbit,
which is a stable and hyperbolic crossing limit cycle if it exists.
\label{uni}
\end{thm}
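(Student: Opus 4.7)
The plan is to reduce Theorem~\ref{uni} to the sharper assertion that every crossing periodic orbit of (\ref{LS}) is a stable hyperbolic limit cycle, i.e.\ its Floquet exponent
\begin{equation*}
\mu(\Gamma)\;=\;\oint_\Gamma f(x)\,dt\;=\;\int_{\Gamma\cap\{x>0\}}f^+(x)\,dt+\int_{\Gamma\cap\{x<0\}}f^-(x)\,dt
\end{equation*}
is strictly negative. From this, uniqueness would follow by a standard Poincar\'e-map argument: letting $P$ denote the return map on the positive $y$-axis, crossing periodic orbits are in bijection with fixed points of $P$, and $P'<1$ at each such fixed point; two distinct fixed points would, by the intermediate value theorem applied to $P-\mathrm{id}$, force a third fixed point in between at which $P'\ge 1$, contradicting the sharper assertion applied to that third crossing periodic orbit.

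To set the stage I would invoke Theorem~\ref{suff}: any crossing periodic orbit $\Gamma$ meets the $y$-axis at exactly two points $(0,A)$ and $(0,B)$ with $B<0<A$, surrounds $O$ and $E$ counterclockwise, and crosses $x=x_*^\pm$ transversally. The substitution $p=p(x)$ from (\ref{pde}) folds both halves of $\Gamma$ into the half-plane $\{p\ge 0\}$ of the $(p,y)$-plane, in which the flow reads $\dot p=f^\pm(x^\pm(p))(p-y)$, $\dot y=g^\pm(x^\pm(p))$, so orbits satisfy $(p-y)\,dy/dp=\phi^\pm(p)$ with $\phi^\pm(p):=g^\pm(x^\pm(p))/f^\pm(x^\pm(p))$. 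The uniqueness of $(x_*^-,x_*^+)$ in (\ref{eq}) becomes the statement that $\phi^+$ and $\phi^-$ agree at a unique level $p_*:=F^+(x_*^+)=F^-(x_*^-)$.

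The heart of the argument is showing $\mu(\Gamma)<0$ under either \textbf{(H4)} or \textbf{(H5)}. Along $\Gamma$, the identity $f^\pm(x)\,dt=\pm\,dp/(p-y)$ on ascending/descending arcs in $p$ converts $\mu(\Gamma)$ into a signed sum of four $p$-integrals in the graphs $y=y(p)$ of the branches of $\Gamma$. Under \textbf{(H5)}, the inequality $K^-(x^-(p_2))<K^+(x^+(p_1))$ for $p_2>p_1\ge p_*$ is exactly $(\phi^-)'(p_2)<(\phi^+)'(p_1)$, and a Gronwall-type comparison on $(p-y)\,y'=\phi^\pm(p)$ then forces the left branches of $\Gamma$ to contribute less, in absolute value, than the corresponding right branches, giving $\mu(\Gamma)<0$. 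Under \textbf{(H4)}, an integration by parts in $p$ using $p=F^+$ and the monotonicity of $F^+f^+/g^+$ on $\{x>x_*^+\}$ plays the same role.

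I expect the main obstacle to be the inner strip $\{0\le p\le p_*\}$, on which neither \textbf{(H4)} nor \textbf{(H5)} gives direct information and on which \textbf{(H1)} still allows $g^+$ to change sign at $x=x_e$. My plan is to exploit the uniqueness in (\ref{eq}) together with \textbf{(H3)}: since $\phi^+-\phi^-$ has its only zero in $[0,p_*]$ at $p_*$ and a prescribed sign at $p=0$ via $\eta^+\le\eta^-$, it keeps a definite sign on $(0,p_*)$, which through the same ODE comparison gives the inner contribution to $\mu(\Gamma)$ the correct sign. Gluing the inner and outer contributions across $p=p_*$ and carefully tracking signs through the two fold points of $\Gamma$ (where $y=p$ on each branch) should then close the argument.
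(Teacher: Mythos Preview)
Your overall strategy---prove $\mu(\Gamma)<0$ for every crossing periodic orbit via the substitution $p=p(x)$ and then deduce uniqueness---matches the paper's, and your treatment of the inner strip $0<p<p_*$ is essentially the paper's argument for $J_1<0$. Two steps, however, have genuine gaps.

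\textbf{Uniqueness from $P'<1$.} Your intermediate-value argument presumes the return map $P$ is defined and continuous on the entire segment of the positive $y$-axis between two fixed points. This is precisely where the multi-equilibrium setting breaks the classical proof: since \textbf{(H1)}--\textbf{(H3)} put no restriction on $g^-$, the annulus between two adjacent crossing periodic orbits can contain equilibria of the left system (sinks, since $\mathrm{div}=f^-<0$), and a forward orbit starting on the positive $y$-axis may be absorbed by such a sink and never return, so $P$ need not be defined there. The paper's Step~2 circumvents this by analysing $\alpha$-limit sets instead: an orbit in the annulus with $\omega$-limit $\Gamma_1$ must, by the nonsmooth Poincar\'e--Bendixson theorem and the absence of other crossing periodic orbits in the annulus, have as $\alpha$-limit a homoclinic loop based at some equilibrium $(\bar x,p(\bar x))$ with $\bar x<0$; but $f^-(\bar x)<0$ forces that loop to be asymptotically stable, contradicting its role as an $\alpha$-limit. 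Your IVT shortcut is fine under the old hypothesis $xg(x)>0$ but does not survive the generalisation that is the whole point of the theorem.

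\textbf{The outer contribution $p>p_*$.} A direct ``Gronwall-type comparison'' cannot work because the right and left outer loops live over different $p$-intervals, namely $[p_*,p_A]$ and $[p_*,p_C]$ with $p_C>p_A$ (an inequality the paper first has to prove). The missing device is the affine rescaling $\widetilde p=\mu p+\eta$, $\widetilde y=\mu y+\eta$ with $\mu=(p_A-p_*)/(p_C-p_*)\in(0,1)$ and $\eta=(1-\mu)p_*$, which maps the left loop onto $[p_*,p_A]$ while preserving the form $d\widetilde p/(\widetilde p-\widetilde y)$ of the integrand. Only after this rescaling does \textbf{(H4)} (equivalently, $\varphi^+(p)/p$ decreasing for $p>p_*$) or \textbf{(H5)} (which controls $G(p):=\mu\,\varphi^-((p-\eta)/\mu)-\varphi^+(p)$ via $G'=K^--K^+<0$) yield a pointwise comparison $h(p,y)<H(p,y)$ of the two scalar ODEs on $(p_*,p_A)$, from which one gets $y_{PA}<\widetilde y_{CM}$ and $\widetilde y_{NC}<y_{AQ}$ and hence $J_2<0$. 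Incidentally, your sign is reversed: to obtain $\mu(\Gamma)<0$ the left (negative) contribution must dominate the right (positive) one in absolute value, not the other way round.
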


Theorem~\ref{uni} is a result about the uniqueness of crossing limit cycles for system (\ref{LS}).
In the aspect of the number of equilibria,
Theorem~\ref{uni} can be regarded as a generalization of \cite[Theorem 3]{JEF} from one equilibrium to multiple equilibria
because the weaker hypothesis {\bf(H1)} than (\ref{dapo}) allows (\ref{LS}) to have multiple equilibria as mentioned before.
In the aspect of the smoothness of systems, Theorem~\ref{uni} can be regarded as a generalization of
\cite[Theorems 2.1 and 2.2]{FL} from smooth Li\'enard systems to nonsmooth ones.

In the following theorem we give a sufficient condition for the existence of periodic annuli for system (\ref{LS}).
It can be looked as a sufficient condition for the nonexistence of isolated crossing periodic orbits, i.e.,
crossing limit cycles.

\begin{thm}
For system {\rm(\ref{LS})} with {\bf (H2)}, assume that {\rm(\ref{limit})} holds and
\begin{eqnarray}
\left.\frac{g^+(x)}{f^+(x)}\right|_{x=x^+(p)}\equiv\left.\frac{g^-(x)}{f^-(x)}\right|_{x=x^-(p)}
\label{noncon}
\end{eqnarray}
for all $p>0$. If system {\rm(\ref{LS})} has a crossing periodic orbit, then there exists a periodic annulus
including this crossing periodic orbit, i.e., there is no crossing limit cycles.
\label{nonexist}
\end{thm}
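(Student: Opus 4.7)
The plan is to introduce the coordinate $p=p(x)$ given in (\ref{pde}) and observe that hypothesis (\ref{noncon}) makes the right and left subsystems project onto a common planar flow in $(p,y)$. Indeed, for $x>0$, $\dot p = f^+(x)\dot x = f^+(x)(p-y)$ and $\dot y = g^+(x)$, so
\[
\frac{dy}{dp}=\frac{g^+(x^+(p))/f^+(x^+(p))}{p-y},
\]
while the analogous computation in $x<0$ gives the same form with $(f^+,g^+,x^+)$ replaced by $(f^-,g^-,x^-)$. By (\ref{noncon}) both quotients equal a common continuous function $\varphi(p):=g^\pm(x^\pm(p))/f^\pm(x^\pm(p))$. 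After rescaling time by $1/f^+(x)$ in the right system and by $1/f^-(x)$ in the left, both subsystems push forward onto the single autonomous pseudo-system
\[
\dot p=p-y,\qquad \dot y=\varphi(p)
\]
on $\{p\geq 0\}$. Because $f^+>0$ and $f^-<0$, forward original time on the right coincides with forward pseudo-time, while forward original time on the left coincides with \emph{backward} pseudo-time.

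Let $\Gamma$ cross the $y$-axis at $(0,y_0)$ with $y_0<0$ (entering $\{x>0\}$, since $\dot x=-y_0>0$ there) and at $(0,y_1)$ with $y_1>0$. At $(0,y_0)$ the pseudo-field has $\dot p=-y_0>0$, so the pseudo-orbit immediately enters $\{p>0\}$, where $\varphi$ is smooth and the orbit can be reparametrized by $p$ via $dy/dp=\varphi(p)/(p-y)$; the right-hand side is Lipschitz in $y$ on any region bounded away from the diagonal $\{y=p\}$, so local existence, uniqueness and continuous dependence apply at $(0,y_0)$, and similarly at $(0,y_1)$. Since $\Gamma$ exists, the pseudo-orbit from $(0,y_0)$ returns transversally to $\{p=0\}$ at $(0,y_1)$. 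For $\tilde y_0$ in a small neighborhood of $y_0$, continuous dependence then implies that the pseudo-orbit from $(0,\tilde y_0)$ also returns to $\{p=0\}$ at some $(0,\tilde y_1)$ near $(0,y_1)$; translating back, the right orbit of (\ref{LS}) from $(0,\tilde y_0)$ hits the $y$-axis again at $(0,\tilde y_1)$. Now the forward left orbit of (\ref{LS}) from $(0,\tilde y_1)$ corresponds to the backward pseudo-orbit from $(0,\tilde y_1)$, which by reversibility of the pseudo-flow is exactly the reverse of the arc just constructed and therefore terminates at $(0,\tilde y_0)$. Concatenating the two halves yields a crossing periodic orbit through every $(0,\tilde y_0)$ close to $(0,y_0)$, producing an open annular family of crossing periodic orbits containing $\Gamma$.

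The main technical issue to justify carefully is that uniqueness and continuous dependence for the pseudo-system really do hold up to the boundary $p=0$, where (\ref{limit}) only guarantees continuity of $\varphi$. This is not an obstruction here because the pseudo-orbits of interest meet $\{p=0\}$ only at $(0,y_0)$ and $(0,y_1)$ with $y_0,y_1\neq 0$, so after reparametrizing by $p$ the vector field $\varphi(p)/(p-y)$ is Lipschitz in $y$ on a small neighborhood of the relevant arc. A second small point is to confirm that a genuine transverse crossing of the $y$-axis forces $y\neq 0$ at the crossing point; this follows from $\dot x=-y$ on $\{x=0\}$, which places $(0,y_0)$ and $(0,y_1)$ safely off the singular diagonal $\{y=p\}$ where the reduction would otherwise break down.
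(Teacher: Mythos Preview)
Your proof is correct and follows essentially the same route as the paper's: introduce the coordinate $p=p(x)$, observe that hypothesis (\ref{noncon}) collapses the two half-plane equations $dy/dp=\varphi^{\pm}(p)/(p-y)$ to a single equation, and then use continuous dependence on initial values to promote the given crossing periodic orbit to a full annulus. The paper phrases the identification directly as ``the two differential equations in (\ref{pyR}) coincide'', while you repackage it as a single pseudo-system $\dot p=p-y,\ \dot y=\varphi(p)$ together with the observation that the left subsystem corresponds to reversed pseudo-time because $f^-<0$; these are equivalent viewpoints, and your explicit discussion of Lipschitz continuity in $y$ near $p=0$ and of why the crossing points avoid the diagonal $\{y=p\}$ fills in details the paper leaves implicit.
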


To apply our main results of system (\ref{LS}), we study the
number of crossing limit cycles for the piecewise linear system
\begin{eqnarray}
\dot z=\left\{
\begin{aligned}
&A^+z+b^+~~~~~~{\rm if}~~x>0,\\
&A^-z+b^-~~~~~~{\rm if}~~x<0,\\
\end{aligned}
\right.
\label{generalPLF}
\end{eqnarray}
where $z=(x, y)^\top\in\mathbb{R}^2$,
$$
A^\pm\!=\!\left(\!
\begin{array}{cc}
a^\pm_{11}&a^\pm_{12}\\
a^\pm_{21}&a^\pm_{22}
\end{array}
\!\right)\!\in\!\mathbb{R}^{2\times2},~~~~
b^\pm\!=\!\left(\!
\begin{array}{c}
b_1^\pm\\
b_2^\pm
\end{array}
\!\right)\!\in\!\mathbb{R}^2.
$$
In this paper we always assume that system (\ref{generalPLF}) is {\it nondegenerate}, i.e., $\det A^\pm\ne0$.
System (\ref{generalPLF}) has been widely used as a model in engineering,
physics and biology (see \cite{ChL-DB, ChL-EE, ATGW}), and many contributions have been made in recent years
(see \cite{ChL-CCJ, ChL-FEEE, ChL-FGG, ChL-EE1, ChL-SX, ChL-EEFF, HLH}).
Although the two subsystems of (\ref{generalPLF}) are linear, the switching of the vector fields
in different regions leads to great complexity and difficulty in
the research on the number of crossing limit cycles. When (\ref{generalPLF}) is continuous,
it is proved in \cite{ChL-EEFF} that there exists at most one crossing limit cycle and this number can be reached.
When (\ref{generalPLF}) is discontinuous, in many publications examples were provided for
(\ref{generalPLF}) to have three crossing limit cycles, such as
\cite{ChL-CCJ, ChL-FEEE, ChL-EE1, ChL-SX}.
However, the problem of maximum number of crossing limit cycles for discontinuous system (\ref{generalPLF})
is still open. On the other hand,
we checked in all papers presenting examples with three crossing limit cycles and found that all these systems
have {\it sliding sets}, namely $\{(0, y): (a^+_{12}y+b_1^+)(a^-_{12}y+b_1^-)<0\}\ne\emptyset$.
Therefore, a natural question is {\it how about the maximum number of crossing limit cycles
for discontinuous system {\rm(\ref{generalPLF})} without sliding sets}.
This question was answered in \cite{JEF, ChL-JJT}
for the case that $O$ is a $\Sigma$-monodromic singularity, i.e.,
all orbits in a small neighborhood of $O$ of (\ref{generalPLF}) turn around $O$, and the maximum number is $1$.
Besides, it was proved in \cite{EPPP} (resp. \cite{ChL-EEPPEE}) that the maximum number is also $1$ for focus-saddle type
(resp. for focus-focus type with partial regions of parameters).
However, the problem of the maximum number of crossing limit cycles for general discontinuous
(\ref{generalPLF}) without sliding sets is still open.
Applying our main results for system (\ref{LS}), we completely answer this open problem in the following theorem.
\begin{thm}
If discontinuous system {\rm(\ref{generalPLF})} has no sliding sets, then
there exists at most one crossing limit cycle and this number can be reached.
Moreover, it is possible that the number of equilibria surrounded by this crossing limit cycle is exactly $k$ {\rm(}$1\le k\le 3${\rm)}.
\label{linear}
\end{thm}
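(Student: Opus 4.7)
The plan is to reduce the piecewise linear system (\ref{generalPLF}) to the Li\'enard form (\ref{LS}) satisfying the hypotheses of Theorem~\ref{uni}, and then to verify by explicit construction that the number $k$ of equilibria surrounded by the crossing limit cycle can take each of the values $1,2,3$.

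First I would analyze what ``no sliding set'' imposes. Writing $h^\pm(y):=a_{12}^\pm y+b_1^\pm$ for the $x$-component of the two vector fields on the $y$-axis, the sliding set is $\{(0,y):h^+(y)h^-(y)<0\}$. Its emptiness together with the existence of orbits crossing the $y$-axis in both directions (which is necessary for any crossing limit cycle) forces $h^+$ and $h^-$ to share a common zero $y_0$ and the product $a_{12}^+a_{12}^-$ to be positive. After translating $y\mapsto y-y_0$ and, if needed, reflecting in $y$, we may take $b_1^\pm=0$ and $a_{12}^\pm<0$. Multiplying each subsystem by the positive constant $1/|a_{12}^\pm|$ rescales time on each half-plane (leaving orbits and hence limit cycles unchanged) and normalizes the coefficient of $y$ in $\dot x$ to $-1$; the globally continuous, piecewise linear change of variable $Y=y+(a_{22}^\pm/|a_{12}^\pm|)\,x$ then eliminates the $y$-dependence from $\dot y$. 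The upshot is that (\ref{generalPLF}) becomes a Li\'enard system of the form (\ref{LS}) with linear $F^\pm(x)=f^\pm x$ and linear $g^\pm(x)=\lambda^\pm x+\delta^\pm$.

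Next I would verify (H1)--(H3) together with (H4) or (H5). Because $f^\pm$ are nonzero constants, $(f^\pm)'\equiv 0$ and hence
$$K^\pm(x)\equiv\frac{\lambda^\pm}{(f^\pm)^2},$$
so (H5) collapses to the single scalar inequality $\lambda^-/(f^-)^2<\lambda^+/(f^+)^2$. The location $x_e$ of the right-half equilibrium is the nonnegative zero of $g^+$, which turns (H1) into a sign condition on $\lambda^+$ and $\delta^+$; (H2) becomes $\mathrm{tr}\,A^+>0>\mathrm{tr}\,A^-$ after the time rescaling, and a global time reversal (which preserves both the number of crossing limit cycles and the absence of sliding sets) handles the opposite sign pattern; (H3) is immediate from L'H\^opital since $g^\pm/f^\pm$ is affine; and the algebraic system (\ref{eq}) admits at most one solution with $x^-<0<x^+$ again by affinity. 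A short case analysis over the signs of $\mathrm{tr}\,A^\pm$ and $\det A^\pm$ finishes the verification, and Theorem~\ref{uni} then yields at most one crossing limit cycle, which is stable (or, after time reversal, unstable) and hyperbolic whenever it exists.

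For the sharpness claim and the realization of $k=1,2,3$ surrounded equilibria, I would exhibit three explicit parameter families of (\ref{generalPLF}). The two potential regular equilibria are the roots of $A^\pm z+b^\pm=0$ whenever they fall in the correct half-planes, and a third equilibrium at the origin (boundary or pseudo-equilibrium) appears exactly when $g^+(0)g^-(0)\le 0$ in the Li\'enard-reduced system. For each $k$, I would produce a configuration in which precisely $k$ of these equilibria lie inside the unique crossing limit cycle, and certify existence of the cycle either by a direct computation of the Poincar\'e half-return maps on the $y$-axis or by a Melnikov-type perturbation from a symmetric integrable configuration. The principal obstacle is the case-by-case verification of (H1)--(H5) across all sign patterns of the coefficients allowed by ``no sliding set'', and, above all, the construction of the $k=3$ example, which requires the crossing limit cycle to simultaneously enclose both regular equilibria and an additional boundary/pseudo-equilibrium on the switching line while still satisfying (H4) or (H5); the linearity of $f^\pm$ and $g^\pm$ after reduction keeps the hypothesis check elementary once the sub-cases are properly organized.
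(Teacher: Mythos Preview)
Your reduction to the Li\'enard canonical form and the idea of invoking Theorem~\ref{uni} is exactly what the paper does in the core case, but your plan has a genuine gap: you assume that after the ``short case analysis over the signs of $\mathrm{tr}\,A^\pm$ and $\det A^\pm$'' Theorem~\ref{uni} alone will finish the job. It does not. In the paper's proof, Theorem~\ref{uni} (via Theorem~\ref{existandunique}) is invoked only in the subcase $a_L\le 0\le a_R$, $d_L>0$, $d_R>0$, $t_L<0<t_R$ (and its symmetric images). All remaining configurations are disposed of by \emph{other} results: when $t_Lt_R\ge0$ neither {\bf(H2)} nor its time-reversal holds, and the paper appeals to Lemma~\ref{linearnon} (which in turn cites \cite{ChL-EE,JDNT}); when $a_L>0\ge a_R$ the origin is $\Sigma$-monodromic and the uniqueness comes from \cite{ChL-JJT}; when $a_L>0$, $a_R>0$ the paper uses a separate half-plane time rescaling to make the system continuous and then invokes \cite{ChL-EEFF}; and when a determinant is negative with the equilibrium on the wrong side, a direct geometric argument rules out crossing cycles. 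Your sentence ``a global time reversal handles the opposite sign pattern'' covers only $t_R<0<t_L$, not $t_Lt_R\ge0$, and your verification of {\bf(H1)} breaks down when, say, $d_R<0$ with $a_R<0$ (then $g^+(x)(x-x_e)=d_R(x-x_e)^2<0$), a case you cannot repair by swapping sides if $d_L<0$ as well.

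For the reachability part, your plan is compatible with the paper's: it exhibits a single one-parameter family (system~(\ref{example}) with $\chi\in\{\epsilon,0,1\}$) and certifies the crossing limit cycle by explicit computation of the left and right Poincar\'e half-return maps, showing $P(0)<0$ and $\lim_{y_0\to-\infty}P'(y_0)<1$; the three values of $\chi$ produce exactly $1$, $2$, $3$ surrounded equilibria. Your proposed Melnikov alternative would also work, but the Poincar\'e-map computation is what the paper actually carries out.
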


The remainder of this paper is organized as follows. In Section 2 we give proofs of
Theorems~\ref{suff}, \ref{uni} and \ref{nonexist} for nonsmooth Li\'enard system (\ref{LS}).
In Section 3 we apply the main results for system (\ref{LS}) to study the number of crossing limit cycles of system (\ref{generalPLF}) and
prove Theorem~\ref{linear}.

\section{Proofs of Theorems~\ref{suff}, \ref{uni} and \ref{nonexist}}
\setcounter{equation}{0}
\setcounter{lm}{0}
\setcounter{thm}{0}
\setcounter{rmk}{0}
\setcounter{df}{0}
\setcounter{cor}{0}

The purpose of this section is to provide the proofs of Theorems~\ref{suff}, \ref{uni} and \ref{nonexist}.
Firstly, we describe some geometrical properties of a crossing periodic orbit of system (\ref{LS})
with {\bf(H1)}.

\begin{figure}[htp]
  \begin{minipage}[t]{1.0\linewidth}
  \centering
  \includegraphics[width=2.50in]{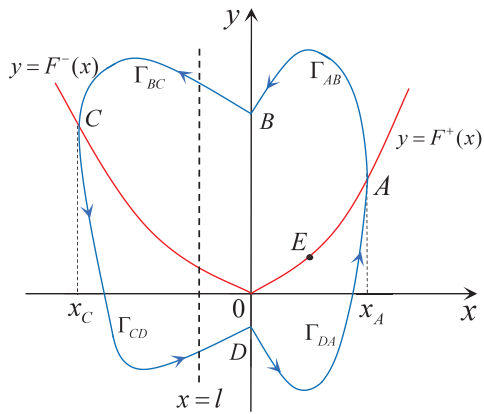}
  \end{minipage}
\caption{An illustration of geometrical properties of $\Gamma$.}
\label{illustra}
\end{figure}

\begin{lm}
If system {\rm(\ref{LS})} with {\bf(H1)} has a crossing periodic orbit $\Gamma$, then
\begin{description}
\setlength{\itemsep}{0mm}
\item {\rm(i)} $\Gamma$ intersects the curve $y=F^+(x)$ {\rm(}resp. $y=F^-(x)${\rm)} at a unique point in $x>0$ {\rm(}resp. $x<0${\rm)},
denoted by $A$ {\rm(}resp. $C${\rm)} as in {\rm Figure~\ref{illustra}};
\item {\rm(ii)} $\Gamma$ intersects any line $x=l$ satisfying $x_C<l<x_A$ at exactly two points, where $x_A$ and $x_C$ are the
abscissas of $A$ and $C$, respectively;
\item {\rm(iii)} $\Gamma$ surrounds $O$ and $E$ counterclockwise.
\end{description}
\label{geo}
\end{lm}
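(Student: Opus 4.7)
The plan is to describe $\Gamma$ half-plane by half-plane, using (H1) and (H2) to control the flow, and an energy-type Lyapunov function to rule out extra intersections with the critical curves $y=F^\pm(x)$. First I would use the identity $\dot x=-y$ on the $y$-axis to show that $\Gamma$ must enter the right half-plane at some $(0,y_0)$ with $y_0<0$ and exit at $(0,y_1)$ with $y_1>0$; this already gives the counterclockwise orientation in (iii) and places $O$ inside the Jordan region bounded by $\Gamma$. The right arc of $\Gamma$ attains its maximum $x$-value at a point $A=(x_A,F^+(x_A))$ on the curve $y=F^+(x)$, and the second-derivative test $\ddot x(t_A)=-g^+(x_A)\le 0$ combined with (H1) shows $x_A\ge x_e$. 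Symmetrically, one locates $C=(x_C,F^-(x_C))$ on $y=F^-(x)$ in the left half-plane.

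Next I would prove the uniqueness in (i) via the energy
\[
E^+(x,y)=\tfrac{1}{2}\bigl(y-F^+(x)\bigr)^2+G^+(x),\qquad G^+(x):=\int_{x_e}^{x}g^+(s)\,ds,
\]
which satisfies $\dot E^+=f^+(x)\bigl(y-F^+(x)\bigr)^2\ge 0$ in $\{x>0\}$, with equality only on $y=F^+(x)$. Hence $E^+$ is strictly increasing off the curve. By (H1), $G^+$ is strictly decreasing on $[0,x_e]$, strictly increasing on $[x_e,+\infty)$, and vanishes at $x_e$. At critical points of $x(t)$, the identity $\ddot x=-g^+(x)$ forces a local max at $x>x_e$ and a local min at $x<x_e$, so any second intersection of $\Gamma$ with $y=F^+$ would be a local min at some $x_2^*\in(0,x_e)$ preceded by a local max at some $x_1^*>x_e$. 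Monotonicity of $E^+$ then gives
\[
G^+(x_2^*)>G^+(x_1^*)>\tfrac{y_0^2}{2}+G^+(0)\ge G^+(0),
\]
contradicting $G^+(x_2^*)<G^+(0)$ obtained from the monotonicity of $G^+$ on $[0,x_e]$. Hence $A$ is unique, and the analogous argument on the left (using $E^-(x,y)=\tfrac{1}{2}(y-F^-(x))^2+G^-(x)$ together with $\dot E^-=f^-(x)(y-F^-(x))^2\le 0$, so $E^-$ strictly decreases off $y=F^-(x)$) yields the unique $C$.

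Once (i) is established, $x(t)$ is unimodal on each arc, so every vertical line $x=l$ with $x_C<l<x_A$ is crossed by $\Gamma$ exactly twice and transversally, proving (ii). For the remaining part of (iii), when $x_e>0$ the line $x=x_e$ meets the right arc at two points, corresponding to the two times when $\dot y=g^+(x_e)=0$: at the first, $y$ is at a local minimum with $y<F^+(x_e)$, and at the second, $y$ is at a local maximum with $y>F^+(x_e)$. Hence $E=(x_e,F^+(x_e))$ lies inside $\Gamma$.

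The hardest step will be the uniqueness of $C$: (H1) constrains only $g^+$, so transporting the energy argument cleanly to the left half-plane requires either an implicit structural hypothesis on $g^-$ forced by the existence of $\Gamma$, or an argument that uses the energy increase on the right together with the closedness of $\Gamma$ to preclude multi-modal behavior on the left.
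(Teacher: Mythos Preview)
Your plan has a genuine gap, which you correctly flag in the last paragraph: the uniqueness of $C$ on the left. The energy argument cannot be rescued there. Hypothesis {\bf(H1)} says nothing about $g^-$, so $G^-$ has no monotonicity structure, and the contradiction $G^-(x_2^*)>G^-(0)$ versus $G^-(x_2^*)<G^-(0)$ simply does not materialize. Your two suggested fixes are not workable either: the mere existence of a crossing periodic orbit imposes no pointwise sign condition on $g^-$, and the right-side energy increase gives information only about the endpoints $D,B$ on the $y$-axis, not about how many times the left arc can touch $y=F^-(x)$. Incidentally, your right-side argument also invokes {\bf(H2)} through $\dot E^+=f^+(x)(y-F^+(x))^2\ge 0$, while the lemma assumes only {\bf(H1)}.

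The paper's route is shorter and treats both half-planes uniformly, using neither an energy function nor any hypothesis on $g^\pm$ for parts (i)--(ii). The single observation is that $\dot x=p(x)-y$, so $\dot x>0$ strictly below the nullcline $y=p(x)$ and $\dot x<0$ strictly above it. Hence along the right arc $D\to B$ (and symmetrically along the left arc $B\to D$), consecutive nullcline crossings cut $\Gamma$ into sub-arcs on which $x$ is strictly monotone; each such sub-arc is a graph $y=y_i(x)$ lying entirely on one side of $y=F^\pm(x)$. If there were three or more crossings on one side, two of these graphs would lie on the same side of the nullcline over overlapping $x$-intervals, and comparing their values at the endpoints (which sit on the nullcline) forces them to intersect, contradicting the simplicity of the periodic orbit $\Gamma$. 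This argument is symmetric in $+/-$ and never looks at $g^\pm$; {\bf(H1)} enters only afterwards, to show $x_A>x_e$ and to run the (iii) argument you already have, which matches the paper's.
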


\begin{proof}
According to the third paragraph in Section 1, $O$ is an equilibrium when $g^+(0)g^-(0)\le0$ and
a parabolic fold-fold point when $g^+(0)g^-(0)>0$. This implies that $\Gamma$ cannot pass through $O$. Moreover,
$\dot x>0$ (resp. $<0$) for all $(x, y)$ in the below (resp. above) of $y=p(x)$, where $p(x)$ is defined in (\ref{pde}).
Therefore, conclusions (i) and (ii) hold and $\Gamma$ surrounds $O$ counterclockwise.
Let $B$ (resp. $D$) be the intersection of $\Gamma$ and the positive $y$-axis (resp. the negative $y$-axis)
and $\Gamma:=\Gamma_{AB}\cup\Gamma_{BC}\cup\Gamma_{CD}\cup\Gamma_{DA}$, see Figure~\ref{illustra}.
If $x_e=0$, conclusion (iii) obviously holds because $E$ lies at $O$.
If $x_e>0$, it follows from {\bf(H1)} that the curve corresponding to $\Gamma_{DA}$ (resp. $\Gamma_{AB}$) goes down
for $0< x< x_e$ and goes up for $x_e< x< x_A$ as $t$ increases.
Thus $\Gamma$ also surrounds $E$ counterclockwise, implying the conclusion (iii).
\end{proof}

\begin{proof}[{\bf Proof of Theorem~\ref{suff}}]
Under hypotheses, if system (\ref{LS}) has a crossing periodic orbit $\Gamma$, then it satisfies the geometrical properties in Lemma~\ref{geo}
and we still use the denotations in the proof of Lemma~\ref{geo}, see Figure~\ref{FxyFpy1}(a).
Conclusion (i) is obtained directly from Lemma~\ref{geo}(iii).

\begin{figure}[h]
  \begin{minipage}[t]{0.5\linewidth}
  \centering
  \includegraphics[width=2.60in]{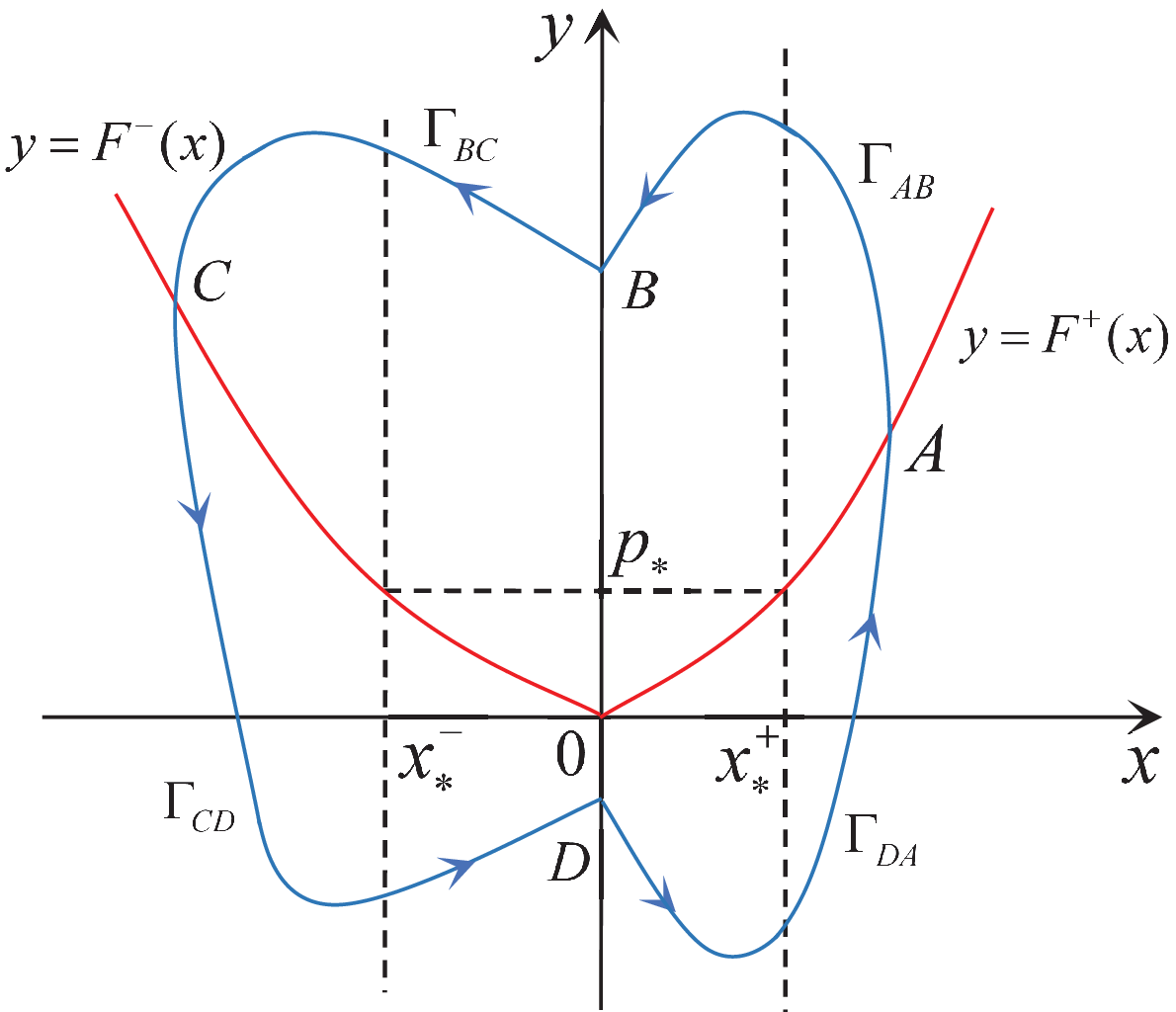}
  \caption*{(a)}
  \end{minipage}
  \begin{minipage}[t]{0.5\linewidth}
  \centering
  \includegraphics[width=2.1in]{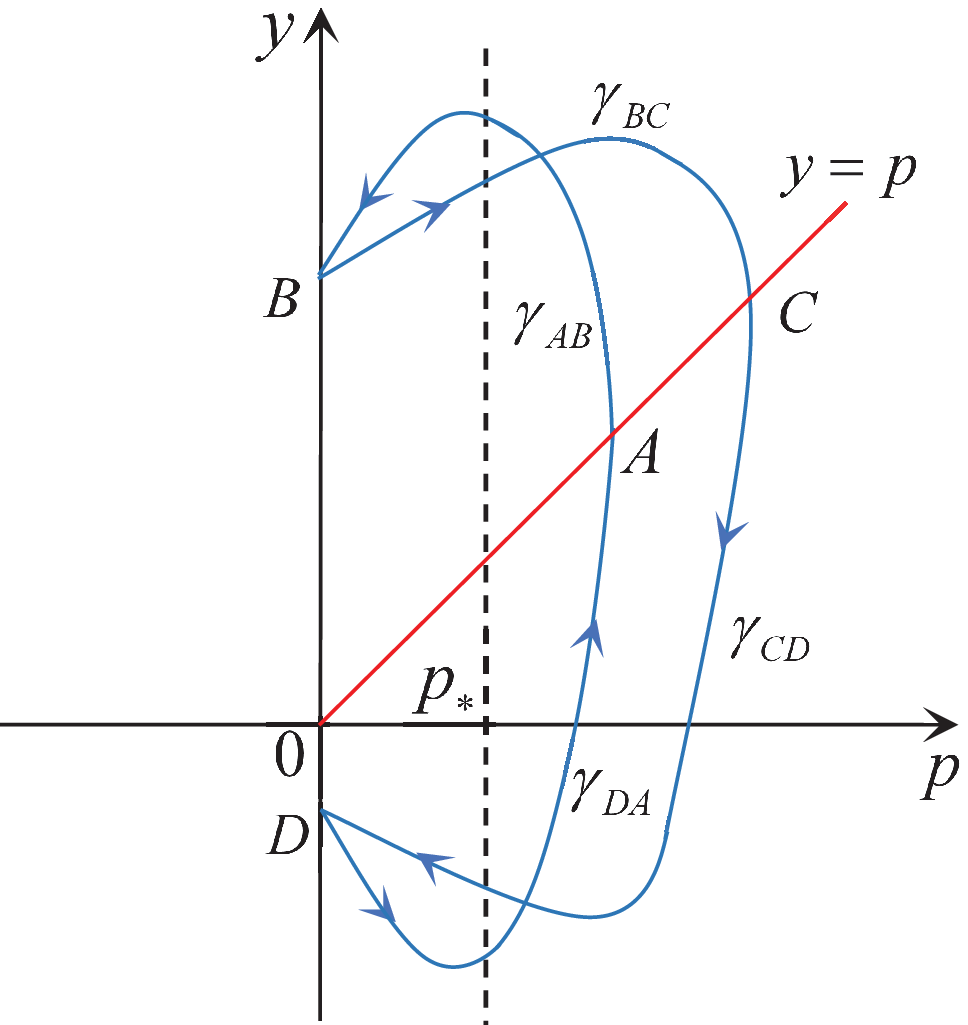}
   \caption*{(b)}
  \end{minipage}
\caption{$\Gamma$ in $xy$-plane and $\gamma$ in $py$-plane.}
\label{FxyFpy1}
\end{figure}

In order to prove conclusion (ii), we apply the change $p=p(x)$ to system (\ref{LS}), where $p(x)$ is defined in (\ref{pde}).
Thus the right and left systems of (\ref{LS}) are transformed into
\begin{eqnarray}
\left\{
\begin{aligned}
&\dot p=f^+(x^+(p))(p-y),\\
&\dot y=g^+(x^+(p)),
\end{aligned}
\right.
~~~~~~
\left\{
\begin{aligned}
&\dot p=f^-(x^-(p))(p-y),\\
&\dot y=g^-(x^-(p)),
\end{aligned}
\right.
\label{pyRR}
\end{eqnarray}
respectively, from which we get
\begin{eqnarray}
\frac{dy}{dp}=\frac{\varphi^+(p)}{p-y}:=\frac{g^+(x^+(p))}{f^+(x^+(p))(p-y)},~~~~~~
\frac{dy}{dp}=\frac{\varphi^-(p)}{p-y}:=\frac{g^-(x^-(p))}{f^-(x^-(p))(p-y)}
\label{pyR}
\end{eqnarray}
for $p>0$, respectively. Here $x^+(p)$ (resp. $x^-(p)$) given in (\ref{inverser}) (resp. (\ref{inversel}))
is the inverse function of $p=p(x)$ for $x\ge0$ (resp. $x<0$).
Clearly, (\ref{pyRR}) and (\ref{pyR}) can be continuously extended to $p=0$ due to the continuities
of $f^\pm, g^\pm$ at $x=0$ and {\bf(H3)}. Thus we always assume
that (\ref{pyRR}) and (\ref{pyR}) are well defined for $p\ge0$.
Moreover, under the change $p=p(x)$, the crossing periodic orbit $\Gamma$ becomes the orbit
$\gamma:=\gamma_{AB}\cup\gamma_{BC}\cup\gamma_{CD}\cup\gamma_{DA}$
in $py$-plane, see Figure~\ref{FxyFpy1}(b), where $\gamma_{DA}\cup\gamma_{AB}$ and $\gamma_{BC}\cup\gamma_{CD}$
are the orbits of the first system and the second one in (\ref{pyRR}), respectively.

Let $\Delta^+$ (resp. $\Delta^-$) be the region surrounded by $y$-axis and $\Gamma_{DA}\cup\Gamma_{AB}$ (resp. $\Gamma_{BC}\cup\Gamma_{CD}$),
$\Omega^+$ (resp. $\Omega^-$) be the region surrounded by $y$-axis and $\gamma_{DA}\cup\gamma_{AB}$ (resp. $\gamma_{BC}\cup\gamma_{CD}$),
$S(\Omega^\pm)$ be the areas of $\Omega^\pm$. By Green's formula we have
\begin{eqnarray}
\begin{aligned}
0=&\int_{\Gamma_{DA}\cup\Gamma_{AB}}\!\!-g^+(x)dx+(F^+(x)-y)dy+\int_{\Gamma_{BC}\cup\Gamma_{CD}}\!\!-g^-(x)dx+(F^-(x)-y)dy\\
=&\iint_{\Delta^+}f^+(x)dxdy+\iint_{\Delta^-}f^-(x)dxdy-\int_{\overline{BD}}-g^+(x)dx+(F^+(x)-y)dy\\
&-\int_{\overline{DB}}-g^-(x)dx+(F^-(x)-y)dy\\
=&\iint_{\Delta^+}f^+(x)dxdy+\iint_{\Delta^-}f^-(x)dxdy\\
=&\iint_{\Omega^+}dpdy-\iint_{\Omega^-}dpdy\\
=&S(\Omega^+)-S(\Omega^-).
\end{aligned}
\label{area}
\end{eqnarray}
Thus $\gamma_{BC}\cup\gamma_{CD}$ must cross $\gamma_{DA}\cup\gamma_{AB}$ at some $p\in(0, \min\{p_A, p_C\})$, where $p_A$ and $p_C$ are the abscissas of
points $A$ and $C$ in $py$-plane. Otherwise, from {\bf(H3)} we get that
$\gamma_{BC}$ (resp. $\gamma_{DA}$) always lies below $\gamma_{AB}$ (resp. $\gamma_{CD}$). This means that $S(\Omega^+)-S(\Omega^-)>0$,
contradicting (\ref{area}).

Suppose that $\varphi^+(p)<\varphi^-(p)$ for $0<p<\min\{p_A, p_C\}$, then
$y_{AB}(p)>y_{BC}(p)$ and $y_{CD}(p)>y_{DA}(p)$ for $0<p<\min\{p_A, p_C\}$ by applying the theory of differential inequalities to systems in (\ref{pyR}),
where $y=y_{AB}(p)$, $y=y_{BC}(p)$, $y=y_{CD}(p)$ and $y=y_{DA}(p)$ describe the orbits $\gamma_{AB}$, $\gamma_{BC}$, $\gamma_{CD}$ and $\gamma_{DA}$,
respectively. Thus $\gamma_{BC}\cup\gamma_{CD}$ does not cross $\gamma_{DA}\cup\gamma_{AB}$, implying a contradiction. Consequently,
$\varphi^+(p)=\varphi^-(p)$ has at least one solution in $0<p<\min\{p_A, p_C\}$, denoted by $p_*$.
Choosing $x^+_*:=x^+(p_*)$ and $x^-_*:=x^-(p_*)$, we finally obtain that the equations (\ref{eq})
have at least one solution $(x^-, x^+)=(x^-_*, x^+_*)$ with $x^-_*<0<x^+_*$ satisfying that $\Gamma$ transversally intersects
both the verticals $x=x^\pm_*$, i.e., conclusion (ii) is proved.
\end{proof}

\begin{proof}[{\bf Proof of Theorem~\ref{uni}}]
The essential idea of this proof comes from \cite{FL, JEF} and it is accomplished by two steps.
Assume that system (\ref{LS}) has a crossing periodic orbit $\Gamma:=(x(t), y(t))$ and define
$$\lambda_\Gamma:=\int_{\Gamma^-}f^-(x(t))dt+\int_{\Gamma^+}f^+(x(t))dt,$$
where $\Gamma^+:=\Gamma\cap\{(x, y):x\ge0\}$ and $\Gamma^-:=\Gamma\cap\{(x, y):x\le0\}$.
In the first step, we prove $\lambda_\Gamma<0$, which implies that $\Gamma$ is a stable and hyperbolic crossing limit cycle
by \cite[Theorem 2.1]{DZ}. In the second step, we prove that system (\ref{LS}) cannot have two stable and hyperbolic limit cycles
in succession, which implies the uniqueness of crossing limit cycles associated with the result of the first step.

{\bf Step 1}. {\it We prove $\lambda_\Gamma<0$}.

Following the denotations and geometric properties of $\Gamma$ in Lemma~\ref{geo} and Theorem~\ref{suff},
we firstly prove $p_C>p_A$. In fact, under the assumption of theorem,
$\varphi^+(p)=\varphi^-(p)$ has a unique solution $p_*\in(0, \min\{p_A, p_C\})$. Moreover, $\varphi^+(p)<\varphi^-(p)$ for $0<p<p_*$ and $\varphi^+(p)>\varphi^-(p)$ for $p>p_*$. Applying the theory of differential inequalities to systems in (\ref{pyR}), we obtain
\begin{eqnarray}
y_{AB}(p)>y_{BC}(p),~~~~~y_{CD}(p)>y_{DA}(p)~~~~~~~~~~{\rm for}~~0<p<p_*.
\label{ewuihfkjn}
\end{eqnarray}
From the proof of Theorem~\ref{suff} $\gamma_{BC}\cup\gamma_{CD}$ must cross $\gamma_{DA}\cup\gamma_{AB}$. Without loss of generality, assume that $\gamma_{BC}\cup\gamma_{CD}$ and $\gamma_{DA}\cup\gamma_{AB}$ have a crossing point at a value $p_1$ for which $\gamma_{AB}$ crosses $\gamma_{BC}$.
Then, it follows from (\ref{ewuihfkjn}) and $\varphi^+(p)>\varphi^-(p)$ for $p>p_*$ that $p_*\le p_1<\min\{p_A, p_C\}$ and
\begin{eqnarray}
\begin{aligned}
&y_{AB}(p)>y_{BC}(p)~~~~~~~~{\rm for}~~0<p<p_1,\\
&y_{AB}(p)<y_{BC}(p)~~~~~~~~{\rm for}~~p_1<p<\min\{p_A, p_C\}.
\end{aligned}
\label{uiwhfvn}
\end{eqnarray}
Here the theory of differential inequalities is applied in the second inequality. Moreover, since the number of crossing points of  $\gamma_{BC}\cup\gamma_{CD}$ and $\gamma_{DA}\cup\gamma_{AB}$ must be even from (\ref{ewuihfkjn}),
we further obtain that there exists a value $p_2$ with $p_*\le p_2<\min\{p_A, p_C\}$ such that $\gamma_{CD}$ crosses $\gamma_{DA}$ at $p_2$. Similarly,
\begin{eqnarray}
\begin{aligned}
&y_{CD}(p)>y_{DA}(p)~~~~~~~~{\rm for}~~0<p<p_2,\\
&y_{CD}(p)<y_{DA}(p)~~~~~~~~{\rm for}~~p_2<p<\min\{p_A, p_C\}.
\end{aligned}
\label{uefnkj}
\end{eqnarray}
Hence, combining with (\ref{uiwhfvn}) and (\ref{uefnkj}), we get $p_C\ge p_A$.

On the other hand, we have $\varphi^+(p)>\varphi^-(p)>0$ and $0>p-y_{AB}(p)>p-y_{BC}(p)$ for $p_*\ll p<p_A$. Thus
$$
\begin{aligned}
\frac{dy_{AB}(p)-dy_{BC}(p)}{dp}&=\frac{\varphi^+(p)}{p-y_{AB}(p)}-\frac{\varphi^-(p)}{p-y_{BC}(p)}\\
&<\frac{\varphi^+(p)}{p-y_{BC}(p)}-\frac{\varphi^-(p)}{p-y_{BC}(p)}\\
&=\frac{\varphi^+(p)-\varphi^-(p)}{p-y_{BC}(p)}\\
&<0
\end{aligned}
$$
for all $p$ with $p_*\ll p<p_A$, so that $y_{AB}(p)-y_{BC}(p)$ is strictly decreasing in $p_*\ll p<p_A$ and then $p_C\ne p_A$, i.e.,
$p_C>p_A$.

\begin{figure}[htp]
  \begin{minipage}[t]{0.5\linewidth}
  \centering
  \includegraphics[width=2.60in]{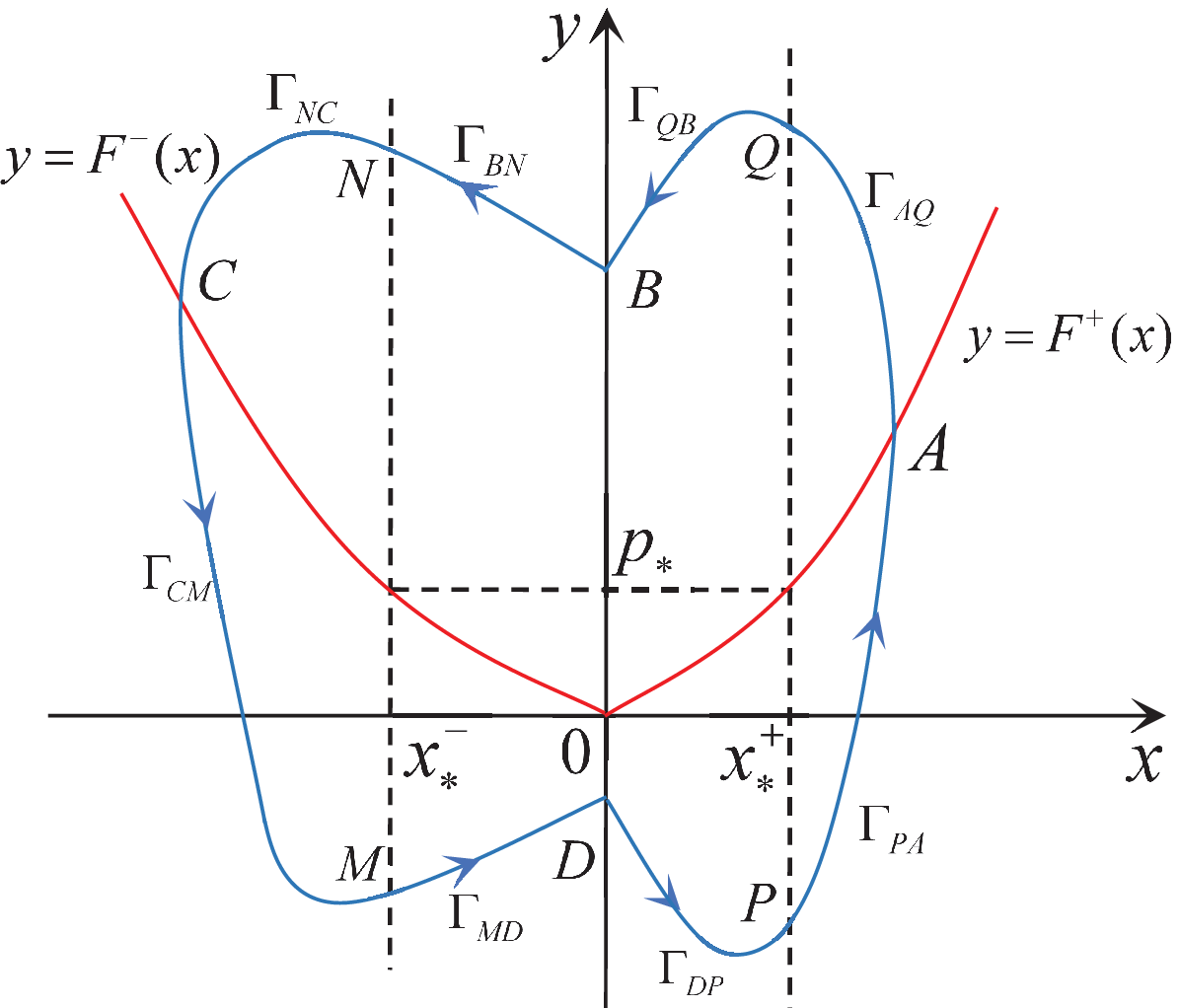}
  \caption*{(a)}
  \end{minipage}
  \begin{minipage}[t]{0.5\linewidth}
  \centering
  \includegraphics[width=2.1in]{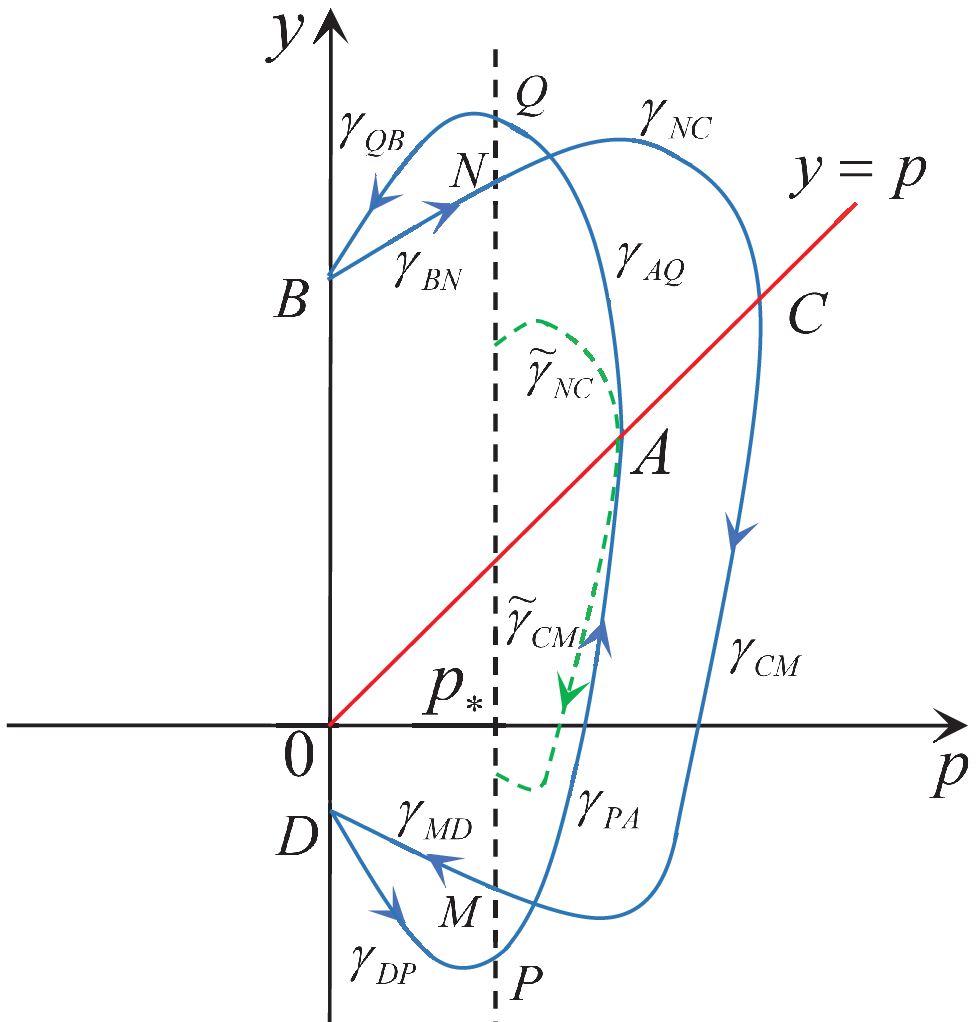}
   \caption*{(b)}
  \end{minipage}
\caption{The eight orbit arcs of $\Gamma$ in $xy$-plane and the ones of $\gamma$ in $py$-plane.}
\label{Fpy2}
\end{figure}

Let $M$ and $N$ (resp. $P$ and $Q$) be the intersections of $\Gamma$ and the vertical $x=x_*^-$ (resp. $x=x_*^+$). Then we denote $\Gamma$
by
$$\Gamma=\Gamma_{DP}\cup\Gamma_{PA}\cup\Gamma_{AQ}\cup\Gamma_{QB}\cup\Gamma_{BN}\cup\Gamma_{NC}\cup\Gamma_{CM}\cup\Gamma_{MD}$$
as shown in Figure~\ref{Fpy2}(a) and $\gamma$ which corresponds with $\Gamma$ under the change $p=p(x)$ by
$$\gamma=\gamma_{DP}\cup\gamma_{PA}\cup\gamma_{AQ}\cup\gamma_{QB}\cup\gamma_{BN}\cup\gamma_{NC}\cup\gamma_{CM}\cup\gamma_{MD}$$
as shown in Figure~\ref{Fpy2}(b), where $\gamma_{DP}\cup\gamma_{PA}\cup\gamma_{AQ}\cup\gamma_{QB}$ and $\gamma_{BN}\cup\gamma_{NC}\cup\gamma_{CM}\cup\gamma_{MD}$
are the orbits of the first system and the second one in (\ref{pyRR}), respectively.
Therefore,
\begin{eqnarray}
\begin{aligned}
\lambda_\Gamma&=\int_{\Gamma_{DP}\cup\Gamma_{QB}}\!f^+(x)dt\!+\!\int_{\Gamma_{BN}\cup\Gamma_{MD}}\!f^-(x)dt
\!+\!\int_{\Gamma_{PA}\cup\Gamma_{AQ}}\!f^+(x)dt\!+\!\int_{\Gamma_{NC}\cup\Gamma_{CM}}\!f^-(x)dt\\
&=\int_{\gamma_{DP}\cup\gamma_{QB}}\!\frac{dp}{p-y}+\int_{\gamma_{BN}\cup\gamma_{MD}}\!\frac{dp}{p-y}
+\int_{\gamma_{PA}\cup\gamma_{AQ}}\!\frac{dp}{p-y}+\int_{\gamma_{NC}\cup\gamma_{CM}}\!\frac{dp}{p-y}
\end{aligned}
\label{lambdaex}
\end{eqnarray}
due to $\dot p=f^\pm(x^\pm(p))(p-y)$ in (\ref{pyRR}). For brevity, we neglect the variable $t$ of $x, y$ and $p$
in (\ref{lambdaex}) and the rest of this proof if confusion does not arise.

Firstly, we prove
\begin{eqnarray}
J_1:=\int_{\gamma_{DP}\cup\gamma_{QB}}\!\frac{dp}{p-y}+\int_{\gamma_{BN}\cup\gamma_{MD}}\!\frac{dp}{p-y}<0.
\label{firstpart}
\end{eqnarray}
Let $y=y_{DP}(p)$, $y=y_{QB}(p), y=y_{BN}(p)$ and $y=y_{MD}(p)$ for $0<p\le p_*$ describe $\gamma_{DP}$, $\gamma_{QB}, \gamma_{BN}$ and $\gamma_{MD}$, respectively. Then
$$p-y_{DP}(p)>0, ~~~p-y_{MD}(p)>0, ~~~p-y_{BN}(p)<0, ~~~p-y_{QB}(p)<0.$$
Moreover, from (\ref{uiwhfvn}) and (\ref{uefnkj}) we have
\begin{eqnarray}
y_{BN}(p)<y_{QB}(p),~~~~~~~~~y_{MD}(p)>y_{DP}(p)
\label{ahfj}
\end{eqnarray}
for $0<p<p_*$.
Hence,
$$
\begin{aligned}
J_1=&\int_0^{p_*}\frac{dp}{p-y_{DP}(p)}+\int_{p_*}^0\frac{dp}{p-y_{QB}(p)}+\int_0^{p_*}\frac{dp}{p-y_{BN}(p)}+\int_{p_*}^0\frac{dp}{p-y_{MD}(p)}\\
=&\int_0^{p_*}\frac{y_{DP}(p)-y_{MD}(p)}{(p-y_{DP}(p))(p-y_{MD}(p))}dp+\int_0^{p_*}\frac{y_{BN}(p)-y_{QB}(p)}{(p-y_{BN}(p))(p-y_{QB}(p))}dp\\
<&0,
\end{aligned}
$$
i.e., (\ref{firstpart}) holds.

Secondly, we prove
\begin{eqnarray}
J_2:=\int_{\gamma_{PA}\cup\gamma_{AQ}}\!\frac{dp}{p-y}+\int_{\gamma_{NC}\cup\gamma_{CM}}\!\frac{dp}{p-y}<0,
\label{secondpart}
\end{eqnarray}
implying $\lambda_\Gamma=J_1+J_2<0$ from (\ref{lambdaex}) and (\ref{firstpart}).
To do this, we define
$$
\mu:=\frac{p_A-p_*}{p_C-p_*}, ~~~~~~~\eta:=\frac{(p_C-p_A)p_*}{p_C-p_*}
$$
as in \cite{FL}. Clearly, $0<\mu<1$ due to $p_C>p_A>p_*$ and $\eta=(1-\mu)p_*$.
By the linear transformation
$$
\left\{
\begin{aligned}
\widetilde p&=\mu p+\eta:=\psi(p),\\
\widetilde y&=\mu y+\eta:=\phi(y),
\end{aligned}
\right.
$$
the second system in (\ref{pyRR}) is transformed into
\begin{eqnarray}
\left\{
\begin{aligned}
\dot{\widetilde p}&=f^-\left(x^-\left(\frac{\widetilde p-\eta}{\mu}\right)\right)(\widetilde p-\widetilde y),\\
\dot{\widetilde y}&=\mu g^-\left(x^-\left(\frac{\widetilde p-\eta}{\mu}\right)\right).
\end{aligned}
\right.
\label{zfasdada}
\end{eqnarray}
Denote the orbit of (\ref{zfasdada}) corresponding with $\gamma_{NC}\cup\gamma_{CM}$ by $\widetilde\gamma_{NC}\cup\widetilde\gamma_{CM}$.
Since $\psi(p_C)=p_A$, $\phi(p_C)=p_A$, $\psi(p_*)=p_*$ and $\phi(p_*)=p_*$, the orbit $\widetilde\gamma_{NC}\cup\widetilde\gamma_{CM}$
is from $(p_*, \mu y_{NC}(p_*)+\eta)$ to $(p_*, \mu y_{CM}(p_*)+\eta)$ after passing through $A$, see Figure~\ref{Fpy2}(b).
Thus
\begin{eqnarray}
\begin{aligned}
J_2=&\int_{\gamma_{PA}\cup\gamma_{AQ}}\frac{dp}{p-y}+\int_{\widetilde\gamma_{NC}\cup\widetilde\gamma_{CM}}\frac{d\widetilde p}{\widetilde p-\widetilde y}\\
=&\int_{p_*}^{p_A}\frac{dp}{p-y_{PA}(p)}+\int_{p_A}^{p_*}\frac{dp}{p-y_{AQ}(p)}+
\int_{p^*}^{p_A}\frac{dp}{p-\widetilde y_{NC}(p)}+\int_{p_A}^{p^*}\frac{dp}{ p-\widetilde y_{CM}(p)}\\
=&\int_{p^*}^{p_A}\frac{y_{PA}(p)-\widetilde y_{CM}(p)}{(p-y_{PA}(p))(p-\widetilde y_{CM}(p))}dp+
\int_{p^*}^{p_A}\frac{\widetilde y_{NC}(p)-y_{AQ}(p)}{(p-y_{AQ}(p))(p-\widetilde y_{NC}(p))}dp,
\end{aligned}
\label{saa}
\end{eqnarray}
where $y=\widetilde y_{NC}(p)$ and $y=\widetilde y_{CM}(p)$ describe $\widetilde\gamma_{NC}$ and $\widetilde\gamma_{CM}$, respectively.
In order to prove $J_2<0$, from (\ref{saa}) it is sufficient to prove
\begin{eqnarray}
y_{PA}(p)-\widetilde y_{CM}(p)<0,~~~~~~~~~\widetilde y_{NC}(p)-y_{AQ}(p)<0
\label{uhfb}
\end{eqnarray}
for $p_*\le p<p_A$ because
$p-y_{PA}(p)>0, p-\widetilde y_{CM}(p)>0, p-y_{AQ}(p)<0, p-\widetilde y_{NC}(p)<0$.
Clearly, $\phi(y)-y=(1-\mu)(p_*-y)$, $p_*-y_{CM}(p_*)>0$ and $p_*-y_{NC}(p_*)<0$. Thus
\begin{eqnarray}
\begin{aligned}
&\widetilde y_{CM}(p_*)-y_{CM}(p_*)=\phi(y_{CM}(p_*))-y_{CM}(p_*)=(1-\mu)(p_*-y_{CM}(p_*))>0,\\
&\widetilde y_{NC}(p_*)-y_{NC}(p_*)=\phi(y_{NC}(p_*))-y_{NC}(p_*)=(1-\mu)(p_*-y_{NC}(p_*))<0
\end{aligned}
\label{nskl}
\end{eqnarray}
due to $0<\mu<1$.
Using (\ref{ahfj}) and (\ref{nskl}), we get
\begin{eqnarray}
\begin{aligned}
&y_{PA}(p_*)=y_{DP}(p_*)\le y_{MD}(p_*)=y_{CM}(p_*)<\widetilde y_{CM}(p_*),\\
&y_{AQ}(p_*)=y_{QB}(p_*)\ge y_{BN}(p_*)=y_{NC}(p_*)>\widetilde y_{NC}(p_*),
\end{aligned}
\label{kjsenjnvd}
\end{eqnarray}
i.e., (\ref{uhfb}) holds for $p=p_*$.

To prove (\ref{uhfb}) for $p_*<p<p_A$, we consider system (\ref{zfasdada}) without tildes and the first system in (\ref{pyR}) for $p_*<p<p_A$.
We can rewrite them as
\begin{eqnarray}
\frac{dy}{dp}=h(p, y):=\frac{\varphi^-((p-\eta)/\mu)}{(p-\eta)/\mu}\cdot\frac{p-\eta}{p-y}
\label{zf2}
\end{eqnarray}
and
\begin{eqnarray}
\frac{dy}{dp}=H(p, y):=\frac{\varphi^+(p)}{p}\cdot\frac{p}{p-y},
\label{ajc2}
\end{eqnarray}
respectively.
We only prove the first inequality in (\ref{uhfb}) when {\bf(H4)} or {\bf(H5)} holds, and
the second one can be treated analogously. Thus $p-y>0$ is always assumed in the following.

Assume that system (\ref{LS}) satisfies {\bf(H4)}. Then $\varphi^+(p)/p$ is decreasing in $p_*<p<p_A$. Moreover,
since $0<\mu<1$ and $\eta=(1-\mu)p_*>0$, we have $(p-\eta)/\mu>p$ for $p>p_*$. Thus
\begin{eqnarray}
\frac{\varphi^-((p-\eta)/\mu)}{(p-\eta)/\mu}<\frac{\varphi^+((p-\eta)/\mu)}{(p-\eta)/\mu}\le\frac{\varphi^+(p)}{p}
\label{uwerc}
\end{eqnarray}
due to $\varphi^-(p)<\varphi^+(p)$ for $p>p_*$. According to {\bf(H4)}, we get $x_*^+\ge x_e$,
so that $g^+(x)/f^+(x)>0$ for $x>x_*^+$ by {\bf(H1)} and {\bf(H2)}, i.e.,
$\varphi^+(p)>0$ for $p_*<p<p_A$. Hence, for $p_*<p<p_A$ and $p-y>0$, $H(p, y)>0$ and then
\begin{eqnarray}
h(p, y)<\frac{\varphi^+(p)}{p}\cdot\frac{p-\eta}{p-y}=H(p, y)\cdot\frac{p-\eta}{p}<H(p, y),
\label{nvks}
\end{eqnarray}
where (\ref{uwerc}) and the fact that $p>\eta>0$ are used. Since $y_{PA}(p_*)<\widetilde y_{CM}(p_*)$ as in (\ref{kjsenjnvd}), if
there exists $\bar p$ with $p_*<\bar p<p_A$ such that $y_{PA}(\bar p)=\widetilde y_{CM}(\bar p)$, we obtain from (\ref{nvks}) that
$y_{PA}(p)>\widetilde y_{CM}(p)$ for $\bar p<p\le p_A$ by applying the theory of differential inequalities to systems (\ref{zf2}) and (\ref{ajc2}). This contradicts the fact that $y_{PA}(p_A)=\widetilde y_{CM}(p_A)$, and consequently, $y_{PA}(p)<\widetilde y_{CM}(p)$ for $p_*<p<p_A$, i.e., the first inequality of (\ref{uhfb}) holds under {\bf(H4)}.

Now assume that system (\ref{LS}) satisfies {\bf(H5)}. Considering the function
$$G(p):=\mu\varphi^-\left(\frac{p-\eta}{\mu}\right)-\varphi^+(p)$$
for $p_*<p<p_A$, we obtain
$$G'(p)=K^-\left(x^-\left(\frac{p-\eta}{\mu}\right)\right)-K^+(x^+(p))<0$$
by {\bf(H5)} and $(p-\eta)/\mu>p$.
Moreover, since $\eta=(1-\mu)p_*$ and $\varphi^+(p_*)=\varphi^-(p_*)$,
$$G(p_*)=\mu\varphi^-\left(\frac{p_*-\eta}{\mu}\right)-\varphi^+(p_*)=\mu\varphi^-(p_*)-\varphi^+(p_*)=(\mu-1)\varphi^+(p_*).$$
Combining with $\varphi^+(p_*)=g^+(x_*^+)/f^+(x_*^+)$ and $0<\mu<1$, we further have $G(p_*)\le0$ if $x_*^+\ge x_e$ and $G(p_*)>0$ if $0<x_*^+<x_e$ by
{\bf(H1)} and {\bf(H2)}. In the first case, $G(p)<0$ and then
$$h(p, y)-H(p, y)=\frac{G(p)}{p-y}<0$$
for $p_*<p<p_A$ and $p-y>0$, implying that $y_{PA}(p)-\widetilde y_{CM}(p)<0$ for $p_*<p<p_A$ by a same analysis with the last paragraph.
In the second case, $G(p)$ has at most one zero point in $p_*<p<p_A$.
When $G(p)$ has no zero points, $G(p)>0$ for $p_*<p<p_A$, so that $h(p, y)-H(p, y)>0$ due to $p-y>0$. By the theory of differential inequalities,
it directly follows from $y_{PA}(p_*)<\widetilde y_{CM}(p_*)$ that $y_{PA}(p)-\widetilde y_{CM}(p)<0$ for $p_*<p<p_A$.
When $G(p)$ has a zero point, denoted by $q$, we get $h(p, y)-H(p, y)>0$ for $p_*<p<q$ and
$h(p, y)-H(p, y)<0$ for $q<p<p_A$. Thus $y_{PA}(p)-\widetilde y_{CM}(p)<0$ for $p_*<p\le q$ and, by a same analysis with the last paragraph
$y_{PA}(p)-\widetilde y_{CM}(p)<0$ for $q<p<p_A$. In conclusion, the first inequality of (\ref{uhfb}) also holds under {\bf(H5)}.

{\bf Step 2}. {\it We prove the uniqueness of crossing periodic orbits.}

Assume that system (\ref{LS}) has two adjacent crossing periodic orbits $\Gamma_1$ and $\Gamma_2$.
By Theorem~\ref{suff}, both $\Gamma_1$ and $\Gamma_2$ surround $O$ and $E$. Moreover,
it follows from Step 1 and \cite[Theorem 2.1]{DZ} that both $\Gamma_1$ and $\Gamma_2$ are stable and hyperbolic crossing limit cycles.
Let $\mathcal{A}$ be the open region surrounded by $\Gamma_1$ and $\Gamma_2$. Consider the $\alpha$-limit set $L$ of the orbit of (\ref{LS}) with some initial value $(x_0, y_0)\in\mathcal{A}$ having $\Gamma_1$ as the $\omega$-limit set. Similar to the smooth case \cite{FL},
by the Poincar\'e-Bendixson Theorem in nonsmooth dynamical systems (see \cite{CBTCR})
and the special structure of (\ref{LS}), $L$ must consist of an equilibrium $(\bar x, p(\bar x))\in\mathcal{A}$ and an unstable homoclinic orbit to $(\bar x, p(\bar x))$ which cuts the switching line $x$-axis. On the other hand, since {\bf(H1)} holds and all crossing periodic orbits surround $E$, we get $\bar x<0$, i.e., $(\bar x, p(\bar x))$ is an equilibrium of the left system. Thus, from {\bf(H2)} we have
${\rm div}(F^-(x)-y, g^-(x))|_{x=\bar x}=f^-(\bar x)<0$. This contradicts that $L$ is unstable by \cite[Theorem 1]{CSDZD}, which not only holds for hyperbolic saddles, in fact, but also holds for semi-hyperbolic ones. That is, $L$ cannot be $\alpha$-limit set of the orbit of (\ref{LS}) with the initial value $(x_0, y_0)$.
Finally, we conclude that (\ref{LS}) has at most one crossing periodic orbit.

Combining with Steps 1 and 2, we complete the proof, that is, system (\ref{LS}) has at most one crossing periodic orbit, which is a
stable and hyperbolic crossing limit cycle if it exists.
\end{proof}

\begin{proof}[{\bf Proof of Theorem~\ref{nonexist}}]
As in the proof of Theorem~\ref{suff}, by the change $p=p(x)$ we can transform the right and left systems of (\ref{LS}) into
the systems in (\ref{pyRR}) and then get differential equations in (\ref{pyR})
for $p>0$. The differential equations in (\ref{pyR}) can be
continuously extended to $p=0$ by defining $\varphi^\pm(0)=\eta^\pm$ due to (\ref{limit}).
Additionally, from (\ref{noncon}) we have $\eta^+=\eta^-$ and $\varphi^+(p)\equiv\varphi^-(p)$ for all $p\ge0$, implying
that the two equations in (\ref{pyR}) coincide for $p\ge 0$. Hence, any orbit of the first differential equations in (\ref{pyR})
going from a point in the negative $y$-axis to a point in the positive $y$-axis corresponds with
a crossing periodic orbit of (\ref{LS}). Conversely, the existence of crossing periodic orbits
of (\ref{LS}) ensures that the first differential equations in (\ref{pyR}) have an orbit going
from a point in the negative $y$-axis to a point in the positive $y$-axis.
Consequently, if (\ref{LS}) has a crossing periodic orbit $\Gamma$, then all orbits in the neighborhood of $\Gamma$ are crossing periodic orbits by the continuous dependence of solutions on initial values, i.e., the proof is completed.
\end{proof}

\section{Application to discontinuous piecewise linear systems}
\setcounter{equation}{0}
\setcounter{lm}{0}
\setcounter{thm}{0}
\setcounter{rmk}{0}
\setcounter{df}{0}
\setcounter{cor}{0}
In this section we apply Theorems~\ref{suff}, \ref{uni} and \ref{nonexist} to study the number of crossing limit cycles for discontinuous system (\ref{generalPLF}). In particular, the proof of Theorem~\ref{linear} will be presented.

According to \cite{ChL-EE}, system (\ref{generalPLF}) has no crossing limit cycles for $a_{12}^+a_{12}^-\le0$
because the $x$-component of both vector fields has same sign on crossing sets. For $a_{12}^+a_{12}^->0$,
it is proved in \cite[Proposition 3.1]{ChL-EE} that (\ref{generalPLF}) is $C^0$-homeomorphic to the Li\'enard canonical form
\begin{eqnarray}
\left(
\begin{array}{c}
\dot x\\
\dot y
\end{array}
\right)=
\left\{
\begin{aligned}
\left(
\begin{array}{cc}
t_R&-1\\
d_R&0
\end{array}
\right)\left(
\begin{array}{c}
x\\
y
\end{array}
\right)-
\left(
\begin{array}{c}
-b\\
a_R
\end{array}
\right)~~~~~~~~{\rm if}~~x>0,\\
\left(
\begin{array}{cc}
t_L&-1\\
d_L&0
\end{array}
\right)\left(
\begin{array}{c}
x\\
y
\end{array}
\right)-
\left(
\begin{array}{c}
0\\
a_L
\end{array}
\right)~~~~~~~~{\rm if}~~x<0,
\end{aligned}
\right.
\label{PPLF}
\end{eqnarray}
where $t_{\{R,L\}}$ and $d_{\{R,L\}}$ are the traces and determinants of $A^\pm$,
$$
b=\frac{a_{12}^-}{a_{12}^+}b_1^+-b_1^-,~~
~~a_L=a_{12}^-b_2^-\!-\!a_{22}^-b_1^-,~~~~
a_R=\frac{a_{12}^-}{a_{12}^+}(a_{12}^+b_2^+\!-\!a_{22}^+b_1^+).
$$
Although (\ref{generalPLF}) and (\ref{PPLF}) are not $\Sigma$-equivalent,
there exists a topological equivalence for all their orbits without sliding segments
as indicated in \cite{ChL-EE}. This means that
crossing limit cycles of (\ref{generalPLF}) are transformed into crossing
limit cycles of (\ref{PPLF}) in a homeomorphic way. Therefore, in order to study the existence, uniqueness and number of crossing limit cycles
of (\ref{generalPLF}), we only need to consider (\ref{PPLF}).

\begin{thm}
Assume that system {\rm(\ref{PPLF})} satisfies
\begin{eqnarray}
b=0,~~~ t_L<0,~~~t_R>0,~~~ d_L>0,~~~ d_R>0.
\label{atcon}
\end{eqnarray}
\begin{description}
\setlength{\itemsep}{0mm}
\item[]{\rm(i)} If $a_R/t_R>a_L/t_L$, then a necessary condition for the existence of crossing periodic orbits is
$d_R/t_R^2>d_L/t_L^2$. In addition, if there exists a crossing periodic orbit, then it is unique and stable.
\item[]{\rm(ii)} If $a_R/t_R<a_L/t_L$, then a necessary condition for the existence of crossing periodic orbits is
$d_R/t_R^2<d_L/t_L^2$. In addition, if there exists a crossing periodic orbit, then it is unique and unstable.
\item[]{\rm(iii)} If $a_R/t_R=a_L/t_L$, then a necessary condition for the existence of crossing periodic orbits is
$d_R/t_R^2=d_L/t_L^2$. In addition, if there exists a crossing periodic orbit, then there exists a
periodic annulus including this crossing periodic orbit.
\end{description}
\label{existandunique}
\end{thm}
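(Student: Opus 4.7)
My plan is to recognize that, under (\ref{atcon}), system (\ref{PPLF}) is already in the form of (\ref{LS}) with $F^+(x)=t_Rx$, $f^+(x)\equiv t_R>0$, $g^+(x)=d_Rx-a_R$ on the right and $F^-(x)=t_Lx$, $f^-(x)\equiv t_L<0$, $g^-(x)=d_Lx-a_L$ on the left, and then to read the three cases directly off Theorems~\ref{suff}, \ref{uni} and \ref{nonexist}. Hypothesis (H2) follows from $t_R>0$, $t_L<0$, and (H1) holds with $x_e:=\max\{a_R/d_R,0\}\ge 0$ since $d_R>0$. Because $p(x)$ is piecewise linear, $x^+(p)=p/t_R$ and $x^-(p)=p/t_L$, so
\[
\left.\frac{g^+}{f^+}\right|_{x^+(p)}=\frac{d_R}{t_R^2}\,p-\frac{a_R}{t_R},\qquad
\left.\frac{g^-}{f^-}\right|_{x^-(p)}=\frac{d_L}{t_L^2}\,p-\frac{a_L}{t_L}.
\]
Thus $\eta^+=-a_R/t_R$, $\eta^-=-a_L/t_L$, and (H3) is equivalent to $a_R/t_R\ge a_L/t_L$ (with the supplementary strict inequality in the equality case becoming $d_R/t_R^2<d_L/t_L^2$). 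Eliminating $x^-=(t_R/t_L)x^+$ from the first equation of (\ref{eq}) and plugging it into the second reduces the system to the single scalar identity
\[
(d_Lt_R^2-d_Rt_L^2)\,x^+=t_L^2t_R\!\left(\frac{a_L}{t_L}-\frac{a_R}{t_R}\right).
\]

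\textbf{Case (i).} Here (H3) holds strictly. The boxed equation admits $x^+>0$ if and only if $d_R/t_R^2>d_L/t_L^2$, which by Theorem~\ref{suff} is the necessary condition asserted. Constancy of $f^\pm$ forces $K^+\equiv d_R/t_R^2$ and $K^-\equiv d_L/t_L^2$, so (H5) collapses to $d_L/t_L^2<d_R/t_R^2$, which is exactly the necessary condition just obtained. Theorem~\ref{uni} then yields uniqueness together with stability (and hyperbolicity).

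\textbf{Case (ii).} Here (H3) fails, but my plan is to apply the orientation-reversing involution $(x,y,t)\mapsto(-x,y,-t)$, which sends (\ref{PPLF}) with $b=0$ to another system of the same canonical form with $b^{\mathrm{new}}=0$ and parameters $t_R^{\mathrm{new}}=-t_L$, $t_L^{\mathrm{new}}=-t_R$, $d_R^{\mathrm{new}}=d_L$, $d_L^{\mathrm{new}}=d_R$, $a_R^{\mathrm{new}}=-a_L$, $a_L^{\mathrm{new}}=-a_R$. The ratios $a/t$ and $d/t^2$ each swap between the two subsystems, so the transformed system falls in case (i) with necessary condition $d_L/t_L^2>d_R/t_R^2$; pulling back via the time reversal exchanges stable with unstable and returns the statement of (ii).

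\textbf{Case (iii).} With $a_R/t_R=a_L/t_L$ the right-hand side of the boxed equation vanishes, so any admissible root requires $d_Lt_R^2=d_Rt_L^2$. Otherwise Theorem~\ref{suff} rules out crossing periodic orbits---directly when $d_R/t_R^2<d_L/t_L^2$, since (H3) then holds, and after first applying the case (ii) involution when $d_R/t_R^2>d_L/t_L^2$. Thus $d_R/t_R^2=d_L/t_L^2$ is necessary; under this equality the explicit expressions for $g^\pm/f^\pm|_{x^\pm(p)}$ above coincide for every $p\ge 0$, so (\ref{noncon}) holds and Theorem~\ref{nonexist} produces the periodic annulus. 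The main bookkeeping obstacle will be verifying that the involution in case (ii) truly preserves the canonical shape (\ref{PPLF}) together with the constraint $b=0$, and that it flips the stability character of limit cycles; once these symmetries are nailed down, the entire theorem reduces to the elementary linear algebra performed in case (i).
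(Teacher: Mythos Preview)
Your proposal is correct and follows essentially the same route as the paper's proof: identify (\ref{PPLF}) with (\ref{LS}) via $F^\pm(x)=t_{\{R,L\}}x$, $g^\pm(x)=d_{\{R,L\}}x-a_{\{R,L\}}$, verify {\bf(H1)}--{\bf(H3)}, compute $x^\pm(p)=p/t_{\{R,L\}}$, and read off case (i) from Theorems~\ref{suff} and~\ref{uni} (via {\bf(H5)}, since $K^\pm$ are constants), case (ii) from the involution $(x,y,t)\mapsto(-x,y,-t)$ swapping the subsystems, and case (iii) from Theorems~\ref{suff} and~\ref{nonexist}. The only cosmetic difference is that you collapse (\ref{eq}) to a single linear equation in $x^+$, whereas the paper keeps the pair (\ref{induceeq}); the logic is identical.
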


\begin{proof}
Since $b=0$, system (\ref{PPLF}) is exactly the nonsmooth Li\'enard system (\ref{LS}) satisfying
\begin{eqnarray}
\begin{aligned}
&F^+(x)=t_Rx,~~~~~~~~&&f^+(x)=t_R,~~~~~~~~~g^+(x)=d_Rx-a_R,\\
&F^-(x)=t_Lx,~~~~~~~~&&f^-(x)=t_L,~~~~~~~~~g^-(x)=d_Lx-a_L.
\end{aligned}
\label{expres}
\end{eqnarray}
Clearly, it follows from (\ref{expres}) and $d_R>0$ in (\ref{atcon}) that {\bf(H1)} holds by choosing
$x_e:=0$ if $a_R\le0$ and $x_e:=a_R/d_R$ if $a_R>0$.
Moreover, {\bf(H2)} holds because of (\ref{expres}) and $t_R>0>t_L$ in (\ref{atcon}).
By the definitions of $x^+(p)$ and $x^-(p)$ given below {\bf(H2)}, we get
$x^+(p)=p/t_R$ and $x^-(p)=p/t_L$. Moreover, for system (\ref{PPLF}) the equations (\ref{eq}) become
\begin{eqnarray}
t_Lx^-=t_Rx^+, ~~~~~~~~~~
\frac{d_Lx^--a_L}{t_L}=\frac{d_Rx^+-a_R}{t_R}.
\label{induceeq}
\end{eqnarray}

If $a_R/t_R>a_L/t_L$, then
$$\lim_{p\rightarrow0^+}\frac{g^+(x^+(p))}{f^+(x^+(p))}=-\frac{a_R}{t_R}<-\frac{a_L}{t_L}=\lim_{p\rightarrow0^+}\frac{g^-(x^-(p))}{f^-(x^-(p))},$$
i.e., {\bf(H3)} holds.
Thus, by Theorem~\ref{suff} a necessary condition for the existence of crossing periodic
orbits is that the equations (\ref{induceeq})
have solutions with $x^-<0<x^+$, which is equivalent to $d_R/t_R^2>d_L/t_L^2$ because $a_R/t_R>a_L/t_L$ and $t_R>0$.
On the other hand, if (\ref{PPLF}) has a crossing periodic orbit, then
$$K^-(x^-(p_2))=\frac{d_L}{t_L^2}<\frac{d_R}{t_R^2}=K^+(x^+(p_1))$$
for all $p_1, p_2$ satisfying $p_2>p_1>0$, i.e., {\bf(H5)} holds, where $K^\pm(x^\pm(p))$ are defined in {\bf(H5)}.
By Theorem~\ref{uni}, (\ref{PPLF}) has a unique crossing periodic orbit, which is stable. Thus conclusion (i) is proved.

If $a_R/t_R<a_L/t_L$, applying the changes $(t, x, y)\rightarrow(-t, -x, y)$
and
\begin{eqnarray}
(t_L, d_L, a_L, t_R, d_R, a_R)\rightarrow(-t_R, d_R, -a_R, -t_L, d_L, -a_L)
\label{change}
\end{eqnarray}
to (\ref{PPLF}) we observe that
the form of (\ref{PPLF}) is invariant.
Thus conclusion (ii) is directly obtained from conclusion (i).

If $a_R/t_R=a_L/t_L$, then
$$\lim_{p\rightarrow0^+}\frac{g^+(x^+(p))}{f^+(x^+(p))}=-\frac{a_R}{t_R}=-\frac{a_L}{t_L}=\lim_{p\rightarrow0^+}\frac{g^-(x^-(p))}{f^-(x^-(p))}.$$
Define
\begin{eqnarray*}
\Lambda(p):=\frac{g^+(x^+(p))}{f^+(x^+(p))}-\frac{g^-(x^-(p))}{f^-(x^-(p))}
\end{eqnarray*}
for $p>0$.
For system (\ref{PPLF}), we get
\begin{eqnarray*}
\Lambda(p)
=\left(\frac{d_R}{t_R^2}p-\frac{a_R}{t_R}\right)-
\left(\frac{d_L}{t_L^2}p-\frac{a_L}{t_L}\right)=\left(\frac{d_R}{t_R^2}-\frac{d_L}{t_L^2}\right)p.
\end{eqnarray*}
When $d_R/t_R^2<d_L/t_L^2$, we have $\Lambda(p)<0$ for $p>0$, i.e., {\bf(H3)} holds. Moreover, $(0, 0)$ is the unique solution of equations (\ref{induceeq}).
Hence,
(\ref{PPLF}) has no crossing periodic orbits by Theorem~\ref{suff}.
When $d_R/t_R^2>d_L/t_L^2$, by the changes $(t, x, y)\rightarrow(-t, -x, y)$ and (\ref{change}),
the nonexistence of crossing periodic orbits is directly obtained
from the case of $d_R/t_R^2<d_L/t_L^2$.
Thus, $d_R/t_R^2=d_L/t_L^2$ is a necessary condition for the existence of crossing periodic orbits.
Then, if there exists a crossing periodic orbit, we have $\Lambda(p)\equiv0$ for all $p>0$, i.e.,
condition (\ref{noncon}) of Theorem~\ref{nonexist} is satisfied.
Therefore, there exists a periodic annulus including this crossing periodic orbit by Theorem~\ref{nonexist}.
Conclusion (iii) is proved.
\end{proof}

We remark that a similar result to Theorem~\ref{existandunique} is given in \cite[Theorem 4]{JEF}
for system (\ref{PPLF}) satisfying (\ref{atcon}) and $a_L>0>a_R$, which is not required in our Theorem~\ref{existandunique}.
So Theorem~\ref{existandunique} generalizes \cite[Theorem 4]{JEF} and this generalization is crucial
for us to prove Theorem~\ref{linear}. We will see this in the proof of Theorem~\ref{linear} later.

\begin{lm}
Assume that $b=0, d_L d_R\ne0$ in system {\rm(\ref{PPLF})}. Then there exist no crossing limit cycles if
$t_Lt_R\ge0$.
\label{linearnon}
\end{lm}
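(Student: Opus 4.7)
The plan is to combine a Green's theorem computation (analogous to the identity (\ref{area}) used in the proof of Theorem~\ref{suff}) with a direct Hamiltonian-symmetry argument for the degenerate subcase $t_R = t_L = 0$.

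Suppose toward a contradiction that (\ref{PPLF}) with $b = 0$ admits a crossing limit cycle $\Gamma$. Since $\Gamma$ is a simple closed curve that crosses the switching line, it meets the $y$-axis transversally at exactly two points $B = (0, y_B)$ and $D = (0, y_D)$ with $y_B > 0 > y_D$; write $\Gamma = \Gamma^+ \cup \Gamma^-$ and let $\Delta^\pm$ denote the regions it bounds in the two half-planes together with the segment $\overline{BD}$. Since $b = 0$ gives $F^\pm(0) = 0$, and the right (resp.\ left) subsystem has constant divergence $t_R$ (resp.\ $t_L$), repeating the Green's theorem chain in (\ref{area}) yields the identity
\begin{equation*}
t_R \, S(\Delta^+) + t_L \, S(\Delta^-) = 0.
\end{equation*}
If $t_L t_R \ge 0$ and $(t_L, t_R) \ne (0, 0)$, both summands have the same sign and at least one is strictly nonzero because $S(\Delta^\pm) > 0$, contradicting the identity; hence in this subcase no crossing periodic orbit exists, and \emph{a fortiori} no crossing limit cycle.

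The remaining subcase is $t_R = t_L = 0$, where the Green identity becomes vacuous. Theorem~\ref{nonexist} does not apply directly because hypothesis \textbf{(H2)} fails, so the plan is to argue by a direct Hamiltonian-symmetry observation. On each side (\ref{PPLF}) is Hamiltonian with first integrals
\begin{equation*}
H_R(x, y) = \tfrac{1}{2} y^2 + \tfrac{1}{2} d_R x^2 - a_R x, \qquad H_L(x, y) = \tfrac{1}{2} y^2 + \tfrac{1}{2} d_L x^2 - a_L x,
\end{equation*}
and on the switching line $H_R(0, y) = H_L(0, y) = y^2/2$. Conservation of $H_R$ along any right-subsystem orbit arc joining $(0, y_D)$ to $(0, y_B)$ forces $y_B^2 = y_D^2$, hence $y_B = -y_D$; the same relation is produced by $H_L$ on the left. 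Consequently the full Poincar\'e return map on the $y$-axis is the identity wherever defined, so every crossing periodic orbit is non-isolated and cannot be a limit cycle.

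The main obstacle is really only the degenerate case $t_R = t_L = 0$; everything else is a clean area-cancellation. The key observation making the return-map argument work is that $F^\pm(0) = 0$ forces $H_R$ and $H_L$ to agree on the switching line, so the $y \mapsto -y$ symmetry coming from the $y^2$-term is common to both subsystems.
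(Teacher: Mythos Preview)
Your proof is correct, but it takes a genuinely different route from the paper's. The paper's proof of this lemma is not self-contained: for $t_Lt_R\ge0$ with $t_L+t_R\ne0$ it simply invokes \cite[Proposition~3.7]{ChL-EE}, and for $t_L=t_R=0$ it observes that each subsystem has a center or a weak saddle and then appeals to \cite[Theorems~2 and~4]{JDNT}. Your argument, by contrast, is entirely internal. In the nondegenerate case you reuse the Green's-theorem identity (\ref{area}) --- which in the linear setting collapses to $t_R\,S(\Delta^+)+t_L\,S(\Delta^-)=0$ --- and obtain a clean Bendixson--Dulac style contradiction; this in fact rules out \emph{all} crossing periodic orbits, not just limit cycles, and does not need {\bf(H1)}--{\bf(H3)} since the first three lines of (\ref{area}) rely only on $F^\pm(0)=0$. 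In the degenerate case $t_L=t_R=0$ your Hamiltonian-symmetry observation (that $H_R$ and $H_L$ agree on the switching line, forcing each half-return map to be $y\mapsto-y$) is more transparent than a citation and makes the mechanism---a global time-reversal symmetry---visible. The only point worth adding explicitly is the justification that $\Gamma$ meets the $y$-axis in exactly two points: since $b=0$ gives $\dot x=-y$ on $x=0$, all crossings of the positive $y$-axis have the same orientation (and likewise on the negative $y$-axis), and a Jordan curve cannot meet a line twice transversally with the same orientation at consecutive crossings; together with the fact that $\Gamma$ cannot pass through $O$ this pins down the two-point intersection you assert.
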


\begin{proof}
If $t_Lt_R\ge0$ and $t_L+t_R\ne0$, the result of no crossing limit cycles is obtained directly from \cite[Proposition 3.7]{ChL-EE}.
If $t_Lt_R\ge0$ and $t_L+t_R=0$, i.e., $t_L=t_R=0$, the equilibrium of the left (resp. right) system of (\ref{PPLF}) is either a center when $d_L>0$ (resp. $d_R>0$)
or a weak saddle (the sum of two eigenvalues is zero) when $d_L<0$ (resp. $d_R<0$).
By \cite[Theorems 2 and 4]{JDNT}, system (\ref{PPLF}) has no crossing limit cycles.
\end{proof}

In the end of this paper we give a proof of Theorem~\ref{linear}.

\begin{proof}[{\bf Proof of Theorem~\ref{linear}}]
As indicated in the second paragraph of this section, we can equivalently consider system (\ref{PPLF})
to investigate the existence, uniqueness and number of crossing limit cycles of discontinuous system
(\ref{generalPLF}). Furthermore, it is easy to verify that (\ref{generalPLF})
has no sliding sets if and only if (\ref{PPLF}) has no ones and that (\ref{generalPLF}) is nondegenerate if and only if
(\ref{PPLF}) is nondegenerate. Therefore, we only need to consider nondegenerate (\ref{PPLF}) without
sliding sets. By the nonexistence of sliding sets and nondegeneracy, (\ref{PPLF}) satisfies
$b=0$ and $d_R d_L\ne0$.

Totally there are $7$ cases
\begin{eqnarray*}
\begin{aligned}
&{\rm (C1)}~~a_L=a_R=0,~~~~~~ &&{\rm (C2)}~~a_L>0\ge a_R, \\
&{\rm (C3)}~~a_L<0\le a_R, && {\rm (C4)}~~a_L>0, a_R>0
\end{aligned}
\end{eqnarray*}
and
$$
{\rm (C5)}~~a_L=0, a_R<0, ~~~~~~{\rm (C6)}~~a_L=0, a_R>0, ~~~~~~{\rm (C7)}~~a_L<0, a_R<0.
$$
By the change
$$
(x, y, t, t_L, d_L, a_L, t_R, d_R, a_R)\rightarrow(-x, -y, t, t_R, d_R, -a_R, t_L, d_L, -a_L),
$$
(C5), (C6) and (C7) are transformed into (C2), (C3) and (C4), respectively.
Thus, we only need to consider (C1), $\cdot\cdot\cdot$, (C4).

Assume that (\ref{PPLF}) satisfies (C1). Then (\ref{PPLF}) is continuous,
where the definition of continuity is given below (\ref{LS}).
It is proved in \cite[Corollary 3]{ChL-EEFF} that continuous (\ref{generalPLF})
has at most one crossing limit cycle, so does (\ref{PPLF}).

Assume that (\ref{PPLF}) satisfies (C2). When $a_R=0$ and $t_R^2-4d_R\ge0$,
the equilibrium of the right system lies in the switching line $y$-axis
and it is neither focus nor center, implying that (\ref{PPLF}) cannot have crossing
limit cycles. When either $a_R=0, t_R^2-4d_R<0$ or $a_L>0>a_R$,
the origin $O$ is a $\Sigma$-monodromic singularity (see \cite{ChL-JJT}),
i.e., all orbits in a small neighborhood of $O$ turn around $O$.
Thus (\ref{PPLF}) also has at most one crossing limit cycle by \cite[Theorem 1.1]{ChL-JJT}.

Assume that (\ref{PPLF}) satisfies (C3). When $d_L<0$ or $d_R<0$, at least one of equilibria of the left and right systems is a saddle.
Moreover, this saddle lies in $x>0$ if it is of the left system and $x\le0$ if it is of the right one,
so that (\ref{PPLF}) has no crossing limit cycles.
When $d_R>0, d_L>0$ and $t_Lt_R\ge0$, (\ref{PPLF}) also has no crossing limit cycles by Lemma~\ref{linearnon}.
When $d_R>0, d_L>0$ and $t_L<0<t_R$, (\ref{PPLF}) satisfies condition (\ref{atcon}) in Theorem~\ref{existandunique}.
Thus (\ref{PPLF}) has at most one crossing limit cycle by Theorem~\ref{existandunique}.
When $d_R>0, d_L>0$ and $t_R<0<t_L$, by the change
$$(x, y, t, t_L, d_L, a_L, t_R, d_R, a_R)\rightarrow(x, -y, -t, -t_L, d_L, a_L, -t_R, d_R, a_R)$$
we obtain the uniqueness of crossing limit cycles from the case $d_R>0, d_L>0, t_L<0<t_R$.

Assume that (\ref{PPLF}) satisfies (C4).
Applying the change
\begin{eqnarray}
(t, x, y)\rightarrow
\left\{
\begin{aligned}
&(t/a_R, x/a_R, y)~~~~&&{\rm for}~x>0,\\
&(t/a_L, x/a_L, y)~~~~~&&{\rm for}~x\le 0
\end{aligned}
\right.
\label{achan}
\end{eqnarray}
to (\ref{PPLF}), we obtain
\begin{eqnarray}
\left(
\begin{array}{c}
\dot x\\
\dot y\end{array}\right)=
\left\{
\begin{aligned}
&\left(
\begin{array}{cc}
  t_R/a_R&-1\\
  d_R/a_R^2&0\\
\end{array}
\right)\left(
\begin{array}{c}
x\\
y\end{array}\right)-
\left(
\begin{array}{c}
  0\\
  1\\
\end{array}
\right)~~~~{\rm if}~x>0,\\
&\left(
\begin{array}{cc}
  t_L/a_L&-1\\
  d_L/a_L^2&0\\
\end{array}
\right)\left(
\begin{array}{c}
x\\
y\end{array}\right)-
\left(
\begin{array}{c}
  0\\
  1\\
\end{array}
\right)~~~~~{\rm if}~x<0.
\end{aligned}
\right.
\label{form}
\end{eqnarray}
Observing that (\ref{form}) is continuous, we know that (\ref{form}) has at most one crossing limit cycle by \cite[Corollary 3]{ChL-EEFF} again.
Finally, we conclude that (\ref{PPLF}) has at most one crossing limit cycle because (\ref{achan}) is a homeomorphism.

In conclusion, nondegenerate and discontinuous (\ref{generalPLF}) without sliding sets has at most one crossing limit cycle.
Next, we show the reachability of this number and the location of crossing limit cycles by considering the following nondegenerate and discontinuous system as an  example,
\begin{eqnarray}
\left(
\begin{array}{c}
\dot x\\
\dot y\end{array}\right)=
\left\{
\begin{aligned}
&\left(
\begin{array}{cc}
  2&-1\\
  2&0\\
\end{array}
\right)\left(
\begin{array}{c}
x\\
y\end{array}\right)-
\left(
\begin{array}{c}
  0\\
  2\\
\end{array}
\right)~~~~&&{\rm if}~x>0,\\
&\left(
\begin{array}{cc}
  -4&-1\\
  5&0\\
\end{array}
\right)\left(
\begin{array}{c}
x\\
y\end{array}\right)-
\left(
\begin{array}{c}
  0\\
  5\chi\\
\end{array}
\right)~~~~~&&{\rm if}~x<0,
\end{aligned}
\right.
\label{example}
\end{eqnarray}
where $\chi\in\{\epsilon, 0, 1\}$ and $-1\ll\epsilon<0$.

For the right system, its equilibrium $E_1:=(1, 2)$ is an unstable focus of (\ref{example}) and $O$ is a visible tangency point, i.e., the orbit of the right system passing through $O$ is tangent to $x=0$ at $O$ from the right side.
Then the forward orbit of the right system starting from $(0, y_0)$ with $y_0\le0$ evolves in the right half plane until it reaches again
$y$-axis at a point $(0, y_1)$ with $y_1>0$ after a finite time $t^+>0$. Hence we define the right Poincar\'e map $P_R(y_0):=y_1$.
As completed in \cite{ChL-EE, ChL-FEEE},
the parametric representation of $P_R$ is given by
$$y_0(t^+)=\frac{e^{-t^+}-\cos t^+}{\sin t^+}+1, ~~~~~y_1(t^+)=-\frac{e^{t^+}-\cos t^+}{\sin t^+}+1$$
for $t^+\in(\pi, \hat t^+]$ and
\begin{eqnarray}
P_R(0)=-2e^{\hat t^+}\sin \hat t^+>0,~~~~~~\lim_{y_0\rightarrow-\infty}P'_R(y_0)=-e^{\pi},
\label{eurej}
\end{eqnarray}
where $\hat t^+\in(\pi, 2\pi)$ satisfies $y_0(\hat t^+)=0$, i.e., the time for the orbit passing from $(0,0)$ to $(0, P_R(0))$ in the
right half plane.

For the left system, its equilibrium $E_2:=(\chi, -4\chi)$ is a stable focus. Moreover, $O$ is a boundary equilibrium if $\chi=0$ and an invisible (resp. a visible) tangency point if $\chi=1$ (resp. $\epsilon$), i.e., the orbit of the left system passing through $O$ is tangent to $x=0$ at $O$ from right (resp. left) side.
Then there exists $\hat z_0\ge0$ such that the forward orbit of the left system starting from $(0, z_0)$ with $z_0\ge\hat z_0$
evolves in the left half plane and reaches again $y$-axis at a point of form $(0, z_1)$ with $z_1\le0$ after a finite time $t^-\ge0$. Choosing $\hat z_0$ as the minimum one, we define the left Poincar\'e map $P_L(z_0):=z_1$.
From \cite{ChL-EE, ChL-FEEE} again, the expression of $P_L$ is given by
$$
z_0(t^-)=\chi\left(\frac{e^{2t^-}-\cos t^--2\sin t^-}{\sin t^-}\right),~~~~~
z_1(t^-)=-\chi\left(\frac{e^{-2t^-}-\cos t^-+2\sin t^-}{\sin t^-}\right)
$$
if $\chi=\epsilon, 1$ and
$$P_L(z_0)=-e^{-2\pi}z_0$$
if $\chi=0$, where $t^-\in(\pi, \hat t^-]$ (resp. $[0, \pi)$) for $\chi=\epsilon$ (resp. $1$) and $\hat t^-\in(\pi, 2\pi)$ satisfies $z_1(\hat t^-)=0$.
In addition, we have
\begin{eqnarray}
\hat z_0=\left\{
\begin{aligned}
&0&&{\rm if}~\chi\ne\epsilon,\\
&5\epsilon e^{2\hat t^-}\sin\hat t^- &&{\rm if}~\chi=\epsilon,
\end{aligned}
\right.
~~~~~~~~~~\lim_{z_0\rightarrow+\infty}P'_L(z_0)=-e^{-2\pi}.
\label{njnj}
\end{eqnarray}

Let $P(y_0):=P_L(P_R(y_0))$. From the first equality of (\ref{eurej}) and (\ref{njnj}), we have $\hat z_0<P_R(0)$ for any $\chi$ and $-1\ll\epsilon<0$. Thus
$P(y_0)$ is well defined for $y_0\le0$ and $P(0)<0$ due to $P_R(0)>0, P_L(\hat z_0)=z_1(\hat t^-)=0$. On the other hand, from the second equality of
(\ref{eurej}) and (\ref{njnj}) we have
$$\lim_{y_0\rightarrow-\infty}P'(y_0)=\lim_{y_0\rightarrow-\infty}P'_L(P_R(y_0))\cdot P'_R(y_0)=e^{-\pi}<1,$$
implying that $P(y_0)>y_0$ for $y_0$ closed to $-\infty$. Therefore, $P(y_0)$ has a fixed point in $y_0<0$ for any $\chi$,
i.e., system (\ref{example}) has
a crossing periodic orbit. Since (\ref{example}) has no sliding sets, then this crossing periodic orbit is a crossing limit cycle
by the first part of this proof, i.e., the reachability is proved.

Notice that for system (\ref{example}), $E_2$ is a boundary equilibrium if $\chi=0$ and a regular equilibrium if $\chi=\epsilon$, but it is not an equilibrium if $\chi=1$. Moreover, $O$ is a regular point if $\chi=1$, a boundary equilibrium if $\chi=0$ and a pseudo-equilibrium if $\chi=\epsilon$. Thus system (\ref{example}) exactly has one equilibrium $E_1$ if $\chi=1$, two equilibria $O(E_2)$ and $E_1$ if $\chi=0$, and three equilibria $E_1, O$ and $E_2$ if $\chi=\epsilon$, which eventually imply that the number of equilibria surrounded by this crossing limit cycle is exactly $1$ (resp. $2, 3$) if $\chi=1$ (resp. $0, \epsilon$). The proof is completed.
\end{proof}

{\footnotesize

}
\end{document}